\newcommand*\circled[1]{\tikz[baseline=(char.base)]{
\node[shape=circle, draw, inner sep=0.6pt] (char) {#1};}
}
\newcommand\kronF[2]{#1^{\circled{\tiny{#2}}}}
\theoremstyle{plain}
\newtheorem{theorem}{Theorem}
\newtheorem{lemma}[theorem]{Lemma}
\theoremstyle{plain}
\newtheorem{remark}{Remark}
\theoremstyle{plain}
\newtheorem{definition}{Definition}
\newcommand\degd{{\rm d}}
\begin{document}

\title{Energy function approximations for differential algebraic polynomial systems of Stokes-type}

\author[1]{Hamza Adjerid}
\author[1]{Jeff Borggaard}

\affil[1]{Department of Mathematics, Virginia Tech, Blacksburg, VA USA}

\date{}                     
\setcounter{Maxaffil}{0}
\renewcommand\Affilfont{\itshape\small}

\maketitle

\abstract{Energy functions are generalizations of controllability and observability Gramians to nonlinear systems and as such find applications in both nonlinear balanced truncation and feedback control.  These energy functions are solutions to the Hamilton-Jacobi-Bellman (HJB) equations---partial differential equations defined over spatial dimensions determined by the number of state variables in the nonlinear system.  Thus, they cannot be resolved with local basis functions for problems of even modest dimension.  In this paper, we extend recent results that utilize Kronecker products to generate polynomial approximations to HJB equations.  Specifically, we consider the addition of linear drift terms that exhibit a Stokes-type differential-algebraic equation (DAE) structure.  This extension leverages the so-called {\em strangeness framework} for DAEs to create separate sets of algebraic and differential variables with differential equations that only involve the differential variables and algebraic equations that relate them both.  At this point, the existing polynomial approximations using Kronecker products can be applied to find approximations to the energy functions.  This approach is demonstrated for two polynomial feedback control problems.  The standard transformation destroys the sparsity in the original system.  Thus, we also present a formulation that preserves the original sparsity structure.}

\maketitle

\section{Introduction}\label{sec1}

Energy functions generalize the notions of controllability and observability to nonlinear systems.  As such, they find natural applications in control and balanced truncation for these systems.  
Consider the nonlinear, control affine input-output system
\begin{equation}
\label{eq:nl_system}
    \dot{x}(t)=f(x(t))+Bu(t), \qquad y(t)=Cx(t),
\end{equation}
where $f:\mathbb{R}^n\rightarrow \mathbb{R}^n$ is a smooth function, $B\in \mathbb{R}^{n\times m}$,
and $C\in \mathbb{R}^{p\times n}$ and initial condition $x(0)=x^0$.   Given the space of admissible controls as ${\cal U}^- = L_2(-\infty,0;\mathbb{R}^m)$ and ${\cal U}^+ = L_2(0,\infty;\mathbb{R}^m)$,
then for any $\eta\leq 1$, we define~\cite{vanderschaft1992L2gainAnalysisNonlinear}
\begin{equation}
\label{eq:pastEnergy}
{\mathcal{E}}_\eta^{-}(x^0)    :=\min_{\substack{u \in {\cal U}^- \\ x(-\infty) = 0 \\ x(0) = x^0}} \ \frac{1}{2} \int_{-\infty}^{0} \eta\Vert {y}(t) \Vert^2    +    \Vert {u}(t) \Vert^2 {\rm{d}}t
\end{equation}
as the {\em past energy}.    The {\em future energy} is defined for three cases.    For $0<\eta\leq 1$, 
\begin{equation}
\label{eq:futureEnergy1}
{\mathcal{E}}_\eta^{+}({    {x}}^0)    :=\min_{\substack{{        u} \in {\cal U}^+ \\ {    {x}}(0) = {    {x}}^0 \\ {    {x}}(\infty) = {        0}}} \ \frac{1}{2} \int_{0}^{\infty} \Vert {        y}(t) \Vert^2    +    
\frac{1}{\eta}\Vert {        u}(t) \Vert^2 {\rm{d}}t;
\end{equation}
for $\eta=0$,
\begin{equation}
\label{eq:controllability}
{\mathcal{E}}_\eta^{+}({    {x}}^0) := \frac{1}{2} \int_0^\infty \Vert {        y}(t) \Vert^2{\rm{d}}t, \quad {    {x}}(0)={    {x}}^0, \ \mbox{and}\ {        u}(t)\equiv {        0};
\end{equation}
and for $\eta<0$,
\begin{equation}
\label{eq:futureEnergy3}
{\mathcal{E}}_\eta^{+}({    {x}}^0)    :=\max_{\substack{{        u} \in {\cal U}^+ \\ {    {x}}(0) = {    {x}}^0, \\    {    {x}}(\infty) = {        0}}} \ \frac{1}{2} \int\displaylimits_{0}^{\infty} \Vert {        y}(t) \Vert^2    +    \frac{1}{\eta}\Vert {        u}(t) \Vert^2 {\rm{d}}t.
\end{equation}
These are often defined using an $L_2$-gain parameter $\gamma_0$ when $\eta = -\gamma_0^{-2}$ \cite{vanderschaft1992L2gainAnalysisNonlinear} or using an $H_\infty$-gain parameter $\gamma>0$ when $\eta = 1-\gamma^{-2}$ \cite{scherpen1996Hinf}.
In the $H_\infty$ setting, when we take the limit $\gamma\rightarrow 1$, $\eta=0$, we recover that ${\mathcal{E}}_0^{-}$ is the controllability energy and ${\mathcal{E}}_0^{+}$ is the observability energy.    Additionally, in the limit $\gamma\rightarrow \infty$, we obtain $\eta=1$ and recover the energy functions used in HJB balancing \cite{scherpen1994normalized}.  Finally, when $\eta=-1$, the past and future energy functions agree.  These connections are detailed in \cite{kramer2023nonlinear1}.    
Furthermore, the energy functions can be characterized by HJB equations, cf.~\cite{scherpen1996Hinf}.    If $\bar{\mathcal E}$ is a solution to
\begin{equation} \label{eq:HJB-NLHinfty2}
 0    =    \frac{\partial \bar{\mathcal{E}}({    {x}})}{\partial {    {x}}} {        f}({    {x}}) + \frac{1}{2}    \frac{\partial \bar{\mathcal{E}}({    {x}})}{\partial {    {x}}} {        B} {        B}^\top     \frac{\partial^\top     \bar{\mathcal{E}}({    {x}})}{\partial {    {x}}}
 - \frac{\eta}{2} {        x}^\top     {        C}^\top        {        Cx}
\end{equation}
with $\bar{\mathcal{E}}({        0}) = 0$ and ${        0}$ is an asymptotically stable fixed point of 
\begin{equation}
    \dot{        x} =    - \left ( {        f}({    {x}}) +{        B}{        B}^\top     \frac{\partial^\top     \bar{\mathcal{E}}({    {x}})}{\partial {    {x}}} \right ), 
\end{equation}
then $\bar{\mathcal{E}}({        x})$ is the past energy function ${\mathcal{E}}_\eta^{-}({    {x}})$ from \eqref{eq:pastEnergy}.
Likewise, if $\tilde{\mathcal{E}}$ is a solution to
\begin{equation} \label{eq:HJB-NLHinfty1}
0    = \frac{\partial \tilde{\mathcal{E}}({    {x}})}{\partial {    {x}}} {        f}({    {x}})
 - \frac{\eta}{2}    \frac{\partial \tilde{\mathcal{E}}({    {x}})}{\partial {    {x}}} {        B} {        B}^\top     \frac{\partial^\top     \tilde{\mathcal{E}}({    {x}})}{\partial {    {x}}}
 + \frac{1}{2}{        x^\top}{        C}^\top     {        C}{        x}
\end{equation}
with $\tilde{\mathcal{E}}({        0}) = 0$ and ${        0}$ is an asymptotically stable fixed point of
\begin{equation}
     \dot{        x} =    {        f}({    {x}}) - \eta{        B} {        B}^\top     \frac{\partial^\top     \tilde{\mathcal{E}}({    {x}})}{\partial {    {x}}}, 
\end{equation}
then this solution $\tilde{\mathcal E}({        x})$ is the future energy function ${\mathcal{E}}_\eta^{+}({    {x}})$ defined in (\ref{eq:futureEnergy1})-(\ref{eq:futureEnergy3}). 

Polynomial approximations to the solutions of (\ref{eq:HJB-NLHinfty2}) and (\ref{eq:HJB-NLHinfty1}) were developed in \cite{kramer2023nonlinear1}.  These leverage the developments of Al'Brekht~\cite{albrekht1961optimal}, Lukes~\cite{lukes1969optimal}, Navasca and Krener~\cite{navasca2000solution}, and the recent exploitation of Kronecker products in \cite{dolgov2019tensor,kalise2018polynomial,breiten2019Taylor,borggaard2019QQR,borggaard2021PQR} to find degree $\degd$ approximations of the form
\begin{align}
\label{eq:pastEpoly}
\mathcal{E}_\eta^{-}(    {x})\approx&\cfrac{1}{2}\left(     {v}_2^\top \kronF{    {x}}{2}+    {v}_3^\top \kronF{    {x}}{3}+\cdots +    {v}_\degd^\top \kronF{    {x}}{\degd}\right),\\
\label{eq:futureEpoly}
\mathcal{E}_\eta^{+}(    {    {x}})\approx&\cfrac{1}{2}\left(     {w}_2^\top \kronF{    {x}}{2}+    {w}_3^\top \kronF{    {x}}{3}+\cdots +    {w}_\degd^\top \kronF{    {x}}{\degd}\right),
\end{align}
where $\kronF{x}{\degd}$ is a Kronecker product involving $\degd$ copies of the vector $    {x}$, e.g. $\kronF{x}{3} =     {x}\otimes    {x}\otimes    {x}$.  For uniqueness, we require the coefficients in (\ref{eq:pastEpoly})-(\ref{eq:futureEpoly}) to be symmetrized, ensuring that equivalent monomial terms have the same coefficient.  In this case, the coefficients ${        v}_k$ and ${        w}_k$ are solutions to structured linear systems that enable efficient computation; cf.~\cite{chen2019RecursiveBlockedAlgorithms,borggaard2021PQR}.    
This paper extends this development to the case of differential algebraic equations (DAEs).

DAEs (also known as descriptor systems or singular systems) arise in many modeling problems.   This is especially common when systems are coupled through algebraic constraints.  Here, we consider an important class of input-output DAE systems with a so-called saddle-point or Stokes-type structure,
\begin{equation}
\label{eq:NaSt_DAE}
\begin{split}
    \begin{pmatrix}
            {E}_{11} &     {0}\\
            {0} &     {0}
    \end{pmatrix} 
    \begin{pmatrix}
        \dot{    {x}}_1 \\
        \dot{    {x}}_2 
    \end{pmatrix} &=
    \begin{pmatrix}
            {A}_{11} &     {A}_{12}\\
            {A}_{12}^\top &     {0}
    \end{pmatrix} 
    \begin{pmatrix}
            {x}_1 \\
            {x}_2 
    \end{pmatrix}
+\begin{pmatrix}
             {N} \\
             {0}    
    \end{pmatrix}
            \left({x}_1 \otimes     {x}_1\right)+
    \begin{pmatrix}
         {B}_1 \\
            {B}_2 
    \end{pmatrix}    {u},\\
    {y}=&~    {C}_1    {x}_1,    
\end{split}
\end{equation}
{  with initial condition $x_1(0)={x}^0 \in \mathbb{R}^{n_1}$,} where $x_1(t)\in\mathbb{R}^{n_1}$, $x_2(t)\in\mathbb{R}^{n_2}$, $u(t)\in\mathbb{R}^m$, and $y(t)\in\mathbb{R}^p$.  All of the matrices above have compatible dimensions for the block structure indicated.  The matrix $E_{11}\in\mathbb{R}^{n_1\times n_1}$ is invertible, $A_{12}\in\mathbb{R}^{n_1\times n_2}$ is full-rank, and we restrict our attention to linear algebraic constraints.  Thus, $N\in\mathbb{R}^{n_1\times n_1^2}$.  Systems of this form appear when imposing linear constraints (here $A_{12}^\top x_1 + B_2u=0$) on quadratic differential equations.  They arise when discretizing boundary control problems~\cite{benner2015timedependent}, in domain decomposition methods~\cite{chan1994domain}, and in discretized flow control problems involving the Stokes or Navier-Stokes equations~\cite{gunzburger1989finite,fursikov1991optimal,borggaard2010linear}.

This paper develops approximations of energy functions and the associated feedback control laws for systems in the form (\ref{eq:NaSt_DAE}) with a quadratic running cost such as the integrand in (\ref{eq:futureEnergy1}).  The development of feedback control for linear DAEs has a rich history that includes the work in the 1980s by Cobb~\cite{cobb1983descriptor,cobb1984controllability}, Lewis~\cite{lewis1985optimal}, and Bender and Laub~\cite{bender1987linear}.  Further connections between DAEs and control theory were made in later works that include~\cite{kunkel2008OptimalControlUnstructured,Sjoberg,heiland2016DAE,ahmad2017MomentmatchingBasedModel} among several others.

To take advantage of recent computational advances in the approximation of feedback controllers for polynomial systems, we leverage the strangeness framework by Kunkel and Mehrmann~\cite{KunkelandMehrmann} to transform the input-output system (\ref{eq:NaSt_DAE}) to a system of ODEs with quadratic nonlinearities.  This same approach is easily extended to higher-degree nonlinearities in $x_1$ as in \cite{borggaard2021PQR}, but we restrict our attention to the quadratic case to simplify the discussion.  We apply the approximation results developed in \cite{kramer2023nonlinear1} to approximate energy functions associated with Stokes-type DAEs using the index reduction strategy in \cite{HeinkandSorenProj}.  We will present formulations for both cases $B_2=0$ and $B_2\neq 0$.  {  We note that these index reduction approaches usually destroy sparsity in these nonlinear systems. Thus, we present a reformulation of these linear systems that preserve sparsity and enable the matrix-free approximations to the energy function coefficients. Finally, we present numerical results that demonstrate the effectiveness of the approximations to the energy functions by using the latter to develop polynomial feedback controllers and comparing numerically integrated costs with the energy function approximations evaluated at the initial condition. }

\section{Background}\label{sec2}

\subsection{Overview of polynomial approximations for HJB\label{sec:II.B}}
Polynomial approximations to energy functions in the forms defined above (\ref{eq:pastEnergy})--(\ref{eq:futureEnergy3}) were developed for nonlinear balancing~\cite{fujimoto2008ComputationNonlinearBalanced}.    
In \cite{kramer2023nonlinear1}, the authors developed a scalable approach to approximate energy functions by polynomials when they are written using Kronecker products.  To review their approach, we define the {\em generalized $k$-way Lyapunov matrix} (or a special \textit{Kronecker sum} \cite{benzi2017approximation}) 
for $M\in\mathbb{R}^{q\times n}$ and $E\in\mathbb{R}^{n\times n}$ as
\begin{equation}\label{eq:calL}
        \mathcal{L}_k^E(M)\! :=\! \underbrace{M \otimes E\otimes \cdots \otimes E}_{k \    \text{times}} + \cdots + \underbrace{E \otimes \cdots \otimes E \otimes M}_{k \ \text{times}} \in \mathbb{R}^{n^{k-1}q \times n^k},
\end{equation}
where we drop the superscript if $E$ is $I_n$, the $n$-dimensional identity matrix. 
We present the results from \cite{kramer2023nonlinear1} for calculating the coefficients ${v}_k$ and ${w}_k$ in (\ref{eq:pastEpoly})-(\ref{eq:futureEpoly}) when system (\ref{eq:nl_system}) is specialized to ${f}({        x})\equiv {Ax}+{        N}\kronF{x}{2}$, and the pairs $({        A},{        B})$ and $({        A},{        C})$ are stabilizable and detectable. 
\begin{theorem}[] {\rm (\cite[Th.~7]{kramer2023nonlinear1})} Consider the system (\ref{eq:nl_system}) with the conditions stated above.    Given $\eta\leq 1$, and past energy function $\mathcal{E}^-_\eta(    {x})$ expanded with the
coefficients $    {v}_i,$    $i = 2, \ldots, \degd$    in (\ref{eq:pastEpoly}). Then, $    {v}_2 = {\tt vec} (    {V}_2)$,
where $    {V}_2$ is the symmetric positive definite solution to
the $\mathcal{H_\infty}$ Riccati equation
\begin{equation}\label{eq:12}
 0=    {A}^\top         {V}_2+    {V}_2    {A}-\eta     {C}^\top         {C}+    {V}_2    {B}    {B}^\top         {V}_2.     
\end{equation}
Moreover, the coefficient vectors $    {v}_k = {\tt vec}(    {V}_k) \in \mathbb{R}^{n^k}$ for $3\leq k\leq \degd$ are the  symmetrized solutions of the linear systems
\begin{equation}
        \mathcal{L}_k((    {A} +     {B}    {B}^\top    {V}_2    )^\top)    {v}_k= 
         -\mathcal{L}_{k-1}(    {N}^\top    )    {v}_{k-1}-\cfrac{\eta}{4}\sum_{\substack{i,j>2\\ i+j=k+2}} ij\,{\tt vec}(    {V}_i^\top     {B}    {B}^\top     {V}_j).
\end{equation}
\end{theorem}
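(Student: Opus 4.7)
The plan is to substitute the polynomial ansatz~(\ref{eq:pastEpoly}) into the HJB equation~(\ref{eq:HJB-NLHinfty2}) with $f(x)=Ax+N\kronF{x}{2}$, expand every term using Kronecker-calculus identities for gradients of symmetrized Kronecker polynomials, and match the coefficients of each Kronecker monomial $\kronF{x}{k}$ separately. This yields one Kronecker-algebraic equation per degree $k\ge 2$: the equation at degree two is the $\mathcal{H}_\infty$ Riccati equation for $V_2$, and the equation at degree $k\ge 3$ is a linear system for $v_k$ whose right-hand side depends only on $v_2,\ldots,v_{k-1}$.

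First I would tabulate each term's contribution to the coefficient of $\kronF{x}{k}$. The linear drift $\partial_x\bar{\mathcal{E}}\cdot Ax$ produces $\mathcal{L}_k(A^\top)v_k$; the quadratic drift $\partial_x\bar{\mathcal{E}}\cdot N\kronF{x}{2}$ produces $\mathcal{L}_{k-1}(N^\top)v_{k-1}$; the observation penalty $-\tfrac{\eta}{2}x^\top C^\top Cx$ lives entirely at degree two; and the quadratic form $\tfrac12\,\partial_x\bar{\mathcal{E}}\,BB^\top\partial_x^\top\bar{\mathcal{E}}$ produces, for each ordered pair $(i,j)$ with $i+j=k+2$, a term proportional to $ij\,{\tt vec}(V_i^\top BB^\top V_j)$. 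Collecting at degree two picks out only $v_2$, the $(i,j)=(2,2)$ cross term, and $C^\top C$; applying ${\tt vec}(AXB)=(B^\top\otimes A){\tt vec}(X)$ converts the resulting Kronecker identity into the stated $\mathcal{H}_\infty$ Riccati equation, with a symmetric positive definite solution guaranteed by the stabilizability/detectability hypotheses on $(A,B)$ and $(A,C)$.

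Next, for $k\ge 3$, I would isolate the contributions that involve $v_k$. These are $\mathcal{L}_k(A^\top)v_k$ from the drift and the two ``boundary'' cross terms $(i,j)=(2,k)$ and $(i,j)=(k,2)$ of the quadratic sum; after symmetrization and Kronecker reshuffling, the latter assemble into $\mathcal{L}_k(V_2 BB^\top)v_k$, and summed with the drift piece give precisely $\mathcal{L}_k\!\bigl((A+BB^\top V_2)^\top\bigr)v_k$. Moving the $v_{k-1}$ contribution and the ``interior'' cross terms ($i,j>2$) to the right-hand side yields the stated recursion. Unique solvability at each order then follows from the Hurwitz-ness of $-(A+BB^\top V_2)$ (the closed-loop drift at the $v_2$ level): its spectrum lies strictly in the open left half-plane, so every $\mathcal{L}_k((A+BB^\top V_2)^\top)$ is nonsingular and the $v_k$ are determined recursively.

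The main obstacle is the bookkeeping of symmetrizations and Kronecker-leg permutations, specifically verifying that (a) the two boundary cross terms combine with $\mathcal{L}_k(A^\top)v_k$ to produce $\mathcal{L}_k((A+BB^\top V_2)^\top)v_k$ with no residual tensor transpositions surviving, and (b) the scalar weights on the interior cross terms collapse to exactly the coefficients stated in the theorem. Both demand careful tracking of where each factor of $1/2$ in the HJB, each sign, each $\eta$, and each combinatorial factor from the gradient identity ultimately land.
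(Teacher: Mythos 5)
Your outline is essentially the derivation behind this result: the paper itself gives no proof (it defers to \cite{kramer2023nonlinear1}), and the cited proof proceeds exactly as you describe, by substituting the Kronecker ansatz into the HJB equation, matching coefficients degree by degree to obtain the Riccati equation at $k=2$ and the $\mathcal{L}_k$ linear systems at $k\ge 3$, with solvability from the spectrum of the closed-loop matrix $A+BB^\top V_2$. Your sketch is consistent with that approach and correctly identifies the only delicate points (the symmetrization bookkeeping and the placement of the factors of $\eta$, $1/2$, and $ij$).
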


\begin{theorem}[] {\rm (\cite[Th.~6]{kramer2023nonlinear1})} Consider the system (\ref{eq:nl_system}) with the conditions stated above. Given $\eta\leq 1$ and future energy function $\mathcal{E}^+_\eta(    {x})$ expanded with the
coefficients ${        w}_i,$ $i = 2, \ldots, \degd$ in (\ref{eq:futureEpoly}). Then, $    {w}_2 = {\tt vec} (    {W}_2)$,
where $    {W}_2$ is the symmetric positive definite solution to
the $\mathcal{H_\infty}$ Riccati equation
\begin{equation}\label{eq:13}
 0=    {A}^\top        {W}_2+    {W}_2    {A}+    {C}^\top        {C}-\eta     {W}_2    {B}    {B}^\top     {W}_2.     
\end{equation}
Moreover, the coefficient vectors $    {w}_k = {\tt vec}(    {W}_k) \in \mathbb{R}^{n^k}$ for $3\leq k\leq \degd$ are the  symmetrized solutions of the linear systems
\begin{align*}
\begin{split}
\mathcal{L}_k((    {A} -\eta     {B}    {B}^\top    {W}_2 )^\top)    {w}_k=
\ -\mathcal{L}_{k-1}(    {N}^\top )    {w}_{k-1}+\cfrac{\eta}{4}\sum_{\substack{i,j>2\\ i+j=k+2}} ij \, {\tt vec}(    {W}_i^\top        {B}    {B}^\top        {W}_j).  
\end{split}
\end{align*}
\end{theorem}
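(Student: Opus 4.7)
The strategy is to substitute the polynomial ansatz \eqref{eq:futureEpoly} together with $f(x)=Ax+N\,\kronF{x}{2}$ into the HJB equation \eqref{eq:HJB-NLHinfty1}, expand using Kronecker-product identities, and match homogeneous polynomial degrees in $x$. Each degree $k \ge 2$ will produce a single matrix (or linear) equation in $w_k$, with lower-degree coefficients already known from previous steps.

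First I would compute the gradient of the ansatz. For a symmetrized $w_k$, the $k$ Leibniz-type terms produced by differentiating $\kronF{x}{k}$ all collapse (up to a reshape) into a single canonical expression, which is exactly what the generalized Lyapunov operator $\mathcal{L}_k$ in \eqref{eq:calL} is designed to encode. Next I would substitute and track the contribution of each HJB term to degree $k$: (i) the drift $Ax$ yields $\mathcal{L}_k(A^\top)w_k$ after transposition; (ii) the bilinear drift $N\,\kronF{x}{2}$ lifts $w_{k-1}$ by one degree into $\mathcal{L}_{k-1}(N^\top)w_{k-1}$; (iii) the source $\tfrac12 x^\top C^\top C x$ appears only at $k=2$; and (iv) the quadratic-gradient term $-\tfrac{\eta}{2}\,\tilde{\mathcal{E}}_x BB^\top \tilde{\mathcal{E}}_x^\top$ generates, at degree $k$, a sum of cross products of the form $\mathrm{vec}(W_i^\top BB^\top W_j)$ for $i+j=k+2$, $i,j\ge 2$, with the combinatorial factor $\tfrac14 ij$ coming from the two $\tfrac12$'s in the ansatz and the two gradient factors $i$ and $j$, all assembled via the vec identity $\mathrm{vec}(UXV)=(V^\top \otimes U)\mathrm{vec}(X)$.

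Collecting the degree-$2$ terms and equating to zero reproduces the $H_\infty$ Riccati equation \eqref{eq:13}; stabilizability of $(A,B)$ and detectability of $(A,C)$, together with $\eta \le 1$, yield the symmetric positive (semi)definite stabilizing solution $W_2$ for which $A_{\mathrm{cl}}:=A-\eta BB^\top W_2$ is Hurwitz. For $k\ge 3$, the only part of the quadratic-gradient sum that touches the unknown $w_k$ comes from the $(i,j)\in\{(2,k),(k,2)\}$ contributions; moving these to the left-hand side converts $\mathcal{L}_k(A^\top)w_k$ into $\mathcal{L}_k(A_{\mathrm{cl}}^\top)w_k$, and leaves exactly the sum stated in the theorem on the right-hand side. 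Because $A_{\mathrm{cl}}$ is Hurwitz, each Kronecker-sum operator $\mathcal{L}_k(A_{\mathrm{cl}}^\top)$ is invertible, so the $w_k$ are uniquely determined by forward recursion in $k$.

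The main obstacle will be the careful Kronecker-product bookkeeping behind steps (i)--(iv) and, in particular, the recombination of the $w_2/w_k$ cross terms with $\mathcal{L}_k(A^\top)w_k$ to produce the closed-loop operator $\mathcal{L}_k(A_{\mathrm{cl}}^\top)w_k$. This relies on (a) the full symmetry of the $w_k$ under permutations of the tensor indices, (b) the definition of $\mathcal{L}_k$ as a sum of $k$ Kronecker summands, and (c) the above vec identity, all used in concert. Modulo these mechanical manipulations, the argument is structurally identical to the past-energy Theorem \cite[Th.~7]{kramer2023nonlinear1} stated above, with signs and $\eta$-factors adjusted to reflect the $-\tfrac{\eta}{2}$ coefficient in \eqref{eq:HJB-NLHinfty1} rather than the $+\tfrac12$ in \eqref{eq:HJB-NLHinfty2}.
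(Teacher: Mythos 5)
Your plan is correct and follows exactly the argument used for this result: the paper itself defers the proof to \cite{kramer2023nonlinear1}, where the polynomial ansatz is substituted into the HJB equation \eqref{eq:HJB-NLHinfty1}, terms are collected by homogeneous degree, the degree-2 terms yield the Riccati equation, and the $(2,k)$/$(k,2)$ cross terms are absorbed into the closed-loop Kronecker-sum operator whose invertibility follows from $A-\eta BB^\top W_2$ being Hurwitz. Nothing in your outline deviates from that route, and your bookkeeping of the $\tfrac{\eta}{4}ij$ factors and the restriction of the remaining sum to $i,j>2$ is consistent with the stated recursion.
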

The proofs for both theorems can be found in \cite{kramer2023nonlinear1}. Although these systems grow exponentially in $k$ and are polynomial in $n$, the authors developed an efficient implementation that takes advantage of the Kronecker structure of the systems based on the work in \cite{chen2019RecursiveBlockedAlgorithms,borggaard2019QQR,borggaard2021PQR}. 
As shown in \cite{kramer2023nonlinear1}, polynomial approximations are accurate near the origin. 

\subsection{DAE strangeness-index and index reduction}
\label{sec:II.D}
In most cases, systems described by a set of differential and algebraic equations are assumed to be equivalent to dynamical systems described only by a set of differential equations where the algebraic part is used to eliminate some variables from the initial set of differential equations. In those cases, the DAE system is turned into a state-space model with the form 
\begin{align*}
        \dot{    {x}}=    {f}(t,    {x},    {u})
\end{align*}
where $    {f:\ } I\times\mathbb{R}^n\times\mathbb{R}^m\rightarrow \mathbb{R}^n$, $t$ is time, $x$ are states and $    {u}$ is the control input. Although this model seems to be more convenient and easier to handle, it presents some disadvantages for some systems that are naturally described by sets of both algebraic and differential equations and when conversion to a state-space model is cumbersome, especially when the number of algebraic equations is large; cf.~\cite{petzold1982differential,Pantelides,Fritzon}.  Computational challenges arise when it is not easy to distinguish between the dynamical states and the algebraic states. The concept of a DAE index was introduced to measure the difficulty encountered when solving a DAE. The most common index is the {\em differential index} $\nu$ that measures how far a DAE is from an equivalent ODE obtained by differentiating the algebraic constraints. Another index related to the differential index is the {\em strangeness index} $\mu$ \cite{KunkelandMehrmann}, which we will denote ``s-index'' throughout the paper. The s-index measures how far the original DAE is from another DAE that decouples the algebraic variables from the differential equations and thus is easier to solve. A central concept of the s-index is invariance, namely, there is minimal sensitivity of the solution to equivalent reformulations.  In some cases, the differential index can share this property. However, the s-index can be defined for under-determined and over-determined systems and is thus a generalization of the differential index. If both the differential index and the s-index are defined, the relation between these two indices is $\mu=\text{max}\{0,\nu-1\}$. Consider a general DAE 
\begin{equation}
\label{eq:g_dae}
    {F}(t,    {x},\dot{    {x}},    {u})=0,
\end{equation} where $    {x} \in \mathbb{R}^n$, $    {u} \in \mathbb{R}^m$ and $F$ is a smooth function of its arguments, along with the set of solutions
\begin{equation*}
\mathbb{L}_\mu=\{(t,    {x},    {x}_{\mu+1},    {u},    {u}_\mu)\ |\    {F}_{\mu}(t,    {x},    {x}_{\mu+1},    {u},    {u}_\mu)=0\},
\end{equation*}
where $    {x}_{\mu+1}=(\dot{    {x}},\dots,    {x}^{(\mu+1)})$ , $    {u}_{\mu}=(\dot{    {u}},\dots,    {u}^{(\mu)})$ and $    {F}_{\mu}(t,    {x},    {x}_{\mu+1},    {u},    {u}_\mu)$ is the derivative array defined as \begin{equation*}
    {F}_{\mu}=\begin{pmatrix}
            {F}(t,    {x},\dot{    {x}},    {u})\\
        \frac{d}{dt}    {F}(t,    {x},\dot{    {x}},    {u})\\
        \vdots\\
        \frac{d^{\mu}}{dt^{\mu}}    {F}(t,    {x},\dot{    {x}},    {u})\\
\end{pmatrix}        .
\end{equation*}
We further use the notation of~\cite{KunkelandMehrmann} to define $F_{\mu;p}$ as the Jacobian of $F_\mu$ with respect to the set of variables in the collection $p$, for example $F_{\mu;x,\dot{x}} = [\frac{\partial}{\partial x} F_\mu~~ \frac{\partial}{\partial \dot{x}}F_\mu]$.  The following hypothesis is used to define the s-index.
\smallskip

\textbf{Hypothesis 1:}~\cite{KunkelandMehrmann} There exist integers $\mu$, $r$, $a$, $d$, and $v$ such that $\mathbb{L}_\mu$ is nonempty and for every 
$(t_0,    {x}^0,\dot{    {x}}^0,\cdots,    {x}^{0,(\mu+1)},u,u_\mu)\in \mathbb{L}_{\mu}$, there exists a neighborhood in which the following properties hold.
\begin{enumerate}
        \item The set $\mathbb{L}_\mu$ forms a manifold of dimension $(\mu+2)n+1-r+(\mu+1)m.$
        \item $\text{rank} (    {F}_{\mu;    {x},    {x}_{\mu+1}})=r$ on $\mathbb{L}_\mu$.
        \item $\text{nullity}(    {F_{\mu;    x,    x_{\mu+1}}}^\top)- \text{nullity}(    {{F}_{\mu-1;    {x},    {x}_{\mu+1}}}^\top)=v$ on $\mathbb{L}_\mu$ with $\text{nullity}({   {F}_{-1;    {x}}}^\top)\equiv 0.$
        \item $\text{rank}(    {F}_{\mu;    {x}_{\mu+1}})=r-a$ on $\mathbb{L}_u$, such that there exists smooth full-rank matrix functions $    {Z}_2 \in \mathbb{R}^{(\mu+1)n\times a} $ and $    {T}_2\in \mathbb{R}^{n \times n-a}$ defined on $\mathbb{L}_\mu$ satisfying
        \begin{itemize}
                \item $    {Z}_2^\top    {F}_{\mu;    {x}_{\mu+1}}=0$
                \item $\text{rank}(    {Z}_2^\top    {F}_{\mu;    {x}})=a$
                \item $    {Z}_2^\top    {F}_{\mu;    {x}}    {T}_2=0$.
        \end{itemize}
     \item $\text{rank}(    {F}_{\mu;    {\dot{x}}}    {T}_2)=d=n-a-v$ on $\mathbb{L}_\mu$, such that there exists a smooth full-rank matrix function $    {Z}_1\in \mathbb{R}^{n\times d}$ that satisfies $\text{rank}(    {Z}_1^\top    {F}_{    {\dot{x}}}    {T}_2)=d$.
\end{enumerate}
We assume that the ranks of these matrices are constant on the manifold $\mathbb{L}_{\mu}$.
\begin{definition}
The {\em s-index} of (\ref{eq:g_dae}) is the smallest non-negative integer $\mu$ such that
Hypothesis 1 is satisfied. A system is called {\em s-free} if it satisfies Hypothesis 1 with $\mu=0$.
\end{definition}
Hypothesis 1 is also constructive in the sense that it provides $d$, the dimension of the differential part of the system, $a$, the dimension of the algebraic part of the system, and $v$ the number of redundant equations when the system is overdetermined. Therefore, there exists a local s-free model with an implicit ODE with $d$ differential equations and $a$ implicit algebraic constraints \cite{Sjoberg}. It was also shown in \cite{KunkelandMehrmann} that if (\ref{eq:g_dae}) is smooth enough and Hypothesis 1 is also satisfied for $\mu+1$, then we can explicitly write the local s-free model as
\begin{equation} 
\label{eq:R_dae}
\begin{split}
    {\dot{x}}_1&={f}_1(t,{x}_1,{u})\\
0~&={f}_2(t,{x}_1,{x}_2,{u})    
\end{split}
\end{equation}
{  where ${f}_1:\mathbb{ I}\times\mathbb{R}^d\times\mathbb{R}^m\rightarrow \mathbb{R}^d \text{ and }{f}_2:\mathbb{ I}\times\mathbb{R}^d\times\mathbb{R}^a\times\mathbb{R}^m\rightarrow \mathbb{R}^a$, and the solution of (\ref{eq:R_dae}) is also a solution of (\ref{eq:g_dae}).
We are interested in the class of optimal control problems with infinite time horizon under DAE constraints. A crucial result from \cite{KunkelandMehrmann} and \cite{Sjoberg} is that under the following conditions, the closed-loop system will also be s-free with a unique solution.}

\begin{itemize}
        \item System (\ref{eq:R_dae}) is s-free for arbitrary $    {u} \in \Omega_u$ and given $x_1$, the equation $    {f}_2(t,    {x}_1,    {x}_2,    {u})=0$ can be uniquely solved for ${x}_2$ in \begin{align*}
        \Omega=\{    {x}_1\in\Omega_{    {x}_1}, \    {x}_2\in\Omega_{    {x}_2}, \    {u}\in\Omega_{    {u}}\ |\    {f}_2(t,    {x}_1,    {x}_2,    {u})=    {0}\},
\end{align*}
where $\Omega_{    {x}_1}$, $\Omega_{    {x}_2}$  and   $\Omega_{    {u}}$ are open and connected sets around an equilibrium state $0$.
\item The Jacobian of ${f}_2(t,    {x}_1,    {x}_2,    {u})$ with respect to $    {x}_2$ is non-singular on $\Omega$.
\item Consistent initial conditions are considered.
\end{itemize}

\smallskip
\section{Projection-based index reduction\label{sec:projection}}
The infinite horizon optimal control problem to find $u^*\in {\cal U}^+\equiv {\cal L}_2(0,\infty;\mathbb{R}^m)$ and the corresponding $y^*$ such that 
\begin{equation}
\label{eq:Opt0}
        \mathcal{E}(    {x}^0)\equiv\min_{\substack{{        u} \in {\cal U}^+ \\ {    {x}_1}(0) = {    {x}}^0 \\ {    {x}_1}(\infty) = {        0}}}\dfrac{1}{2}\int_0^{\infty}||    {y}(t)||^2+\dfrac{1}{\eta}||    {u}(t)||^2dt
        =\dfrac{1}{2}\int_0^{\infty}||    {y}^*(t)||^2+\dfrac{1}{\eta}||    {u}^*(t)||^2dt,
\end{equation}
subject to DAE constraints of the form (\ref{eq:NaSt_DAE}) 
\begin{equation*}
\begin{split}     
    \begin{pmatrix}
     {E}_{11} &     {0}\\
     {0} &     {0}
\end{pmatrix} 
\begin{pmatrix}
 \dot{    {x}}_1 \\
 \dot{    {x}}_2 
\end{pmatrix} &=
\begin{pmatrix}
     {A}_{11} &     {A}_{12}\\
     {A}_{12}^\top &     {0}
\end{pmatrix} 
\begin{pmatrix}
     {x}_1 \\
     {x}_2 
\end{pmatrix}
+\begin{pmatrix}
     {N} \\
        {0}   
\end{pmatrix}\begin{pmatrix}
     {x}_1 \otimes     {x}_1 
\end{pmatrix}+\begin{pmatrix}
     {B}_1 \\
     {B}_2 
\end{pmatrix}    {u}\\
   {y}&=    {C}_1    {x}_1,
\end{split}
\end{equation*}
{  where $x_1(0)={x}^0 \in \mathbb{R}^{n_1}$ is the initial condition of the above DAE}.  This is the analogue of the future energy function (\ref{eq:futureEnergy1}) extended to our DAE (\ref{eq:NaSt_DAE}).  To apply the results from the previous section, we need to reformulate (\ref{eq:NaSt_DAE}) as an s-free DAE.  Since system (\ref{eq:NaSt_DAE}) can arise from finite element discretizations of the Navier-Stokes equations, a discrete version of the Leray projector, e.g.~\cite{temam1977ns}, is natural.  Similar approaches were used in~\cite{HeinkandSorenProj,BennerIndex2,Miro} and here, we follow the approach in~\cite{HeinkandSorenProj} with the discrete projector
\begin{equation}
      {\Pi}= {I}_{n_1}- {A}_{12}\left( {A}_{12}^\top {E}_{11}^{-1} {A}_{12}\right)^{-1} {A}_{12}^\top {E}_{11}^{-1}.
\end{equation}
We also make use of the following decomposition of this (oblique complementary) projector, e.g.~\cite{brust2020computationally}, as
\begin{displaymath}
    \Pi = \Theta_\ell \Theta_r^\top, \qquad \mbox{such that} \qquad \Theta_\ell^\top \Theta_r = I_{n_1-n_2}.
\end{displaymath}
We now write $x_1$ in its discrete Helmholtz decomposition of a discretely divergence-free ($x_f$) part and a discrete gradient term ($x_g$) that arises from the $B_2$ term in (\ref{eq:NaSt_DAE}),
\begin{equation}
   \label{eq:split}
        {x}_1=   x_f+   {x}_g ,
\end{equation}
where 
\begin{equation}
\label{eq:split22}
  {A}_{12}^\top   x_f=   {0}
  \qquad \mbox{and} \qquad {x}_g=-   {E}_{11}^{-1}   {A}_{12}\left(   {A}_{12}^\top   {E}_{11}^{-1}   {A}_{12}\right)^{-1}{B}_2   {u}.
\end{equation}
Using (\ref{eq:split}) and (\ref{eq:split22}), we can rewrite (\ref{eq:NaSt_DAE}) as
\begin{align}
\label{eq:DAE_2}
   {E}_{11}\dot{x}_f&=   {A}_{11}   x_f+   {A}_{12}   {x_2}+   {N}\kronF{   {x}_1}{2}+\left(   {B}_1-   {A}_{11}   {E}_{11}^{-1}   {A}_{12}\left(   {A}_{12}^\top   {E}_{11}^{-1}   {A}_{12}\right)^{-1}   {B}_2\right)   {u}\\
   &\qquad -{A}_{12}\left(   {A}_{12}^\top   {E}_{11}^{-1}   {A}_{12}\right)^{-1}   {B}_2\dot{   {u}}\nonumber \\
   0&={A}_{12}^\top   x_f\\
   \label{eq:output}
   {y}&=   {C_1}   x_f-   {C_1}   {E}_{11}^{-1}   {A}_{12}\left(   {A}_{12}^\top   {E}_{11}^{-1}   {A}_{12}\right)^{-1}   {B}_2   {u}.
\end{align}
The term $\dot{u}$ in (\ref{eq:DAE_2}) is missing when $B_2=0$, but in the $B_2\neq 0$ case, $\dot{u}$ can be eliminated by multiplying both sides by ${\Pi}$. For this case, the equation (\ref{eq:DAE_2}) becomes
\begin{align*}  
   {\Pi}   {E}_{11}\dot{x}_f&=   {\Pi}   {A}_{11}   x_f+   {\Pi}   {A}_{12}   {x_2}+   {\Pi}   {N}\kronF{   {x}_1}{2}+   {\Pi}\left(   {B}_1-   {A}_{11}   {E}_{11}^{-1}   {A}_{12}\left(   {A}_{12}^\top   {E}_{11}^{-1}   {A}_{12}\right)^{-1}   {B}_2\right)   {u}\\&\quad-   {\Pi}   {A}_{12}\left(   {A}_{12}^\top   {E}_{11}^{-1}   {A}_{12}\right)^{-1}   {B}_2\dot{   {u}}.
\end{align*}
Now, since $-   {\Pi}   {A}_{12}\left(   {A}_{12}^\top   {E}_{11}^{-1}   {A}_{12}\right)^{-1}   {B}_2=0$ and $   {\Pi}   {A}_{12}=0$, we have
\begin{align*}  
   {\Pi}   {E}_{11}\dot{x}_f&=   {\Pi}   {A}_{11}   x_f+   {\Pi}   {N}\kronF{   {x}_1}{2}+   {\Pi}   {B}   {u}
\end{align*}
where $   {B}=\left(   {B}_1-   {A}_{11}   {E}_{11}^{-1}   {A}_{12}\left(   {A}_{12}^\top   {E}_{11}^{-1}   {A}_{12}\right)^{-1}   {B}_2\right)$.\\
To express everything in terms of the $x_f$ variable, we expand $   {\kronF{   {x}_1}{2}}$ as 
\begin{align*}
\begin{split}
   \kronF{   {x}_1}{2}=&~\kronF{   x_f}{2}+\kronF{   {x}_g}{2}+   x_f\otimes   {x}_g+   {x}_g\otimes   x_f\\=&~\kronF{   x_f}{2}+\kronF{(   {E}_{11}^{-1}   {A}_{12}\left(   {A}_{12}^\top   {E}_{11}^{-1}   {A}_{12}\right)^{-1}   {B}_2)}{2}   \kronF{u}{2}-(   {E}_{11}^{-1}   {A}_{12}\left(   {A}_{12}^\top   {E}_{11}^{-1}   {A}_{12}\right)^{-1}   {B}_2\otimes   I_{n_1})  (u\otimes{x_f})\\
   &~-(   I_{n_1}\otimes   {E}_{11}^{-1}   {A}_{12}\left(   {A}_{12}^\top   {E}_{11}^{-1}   {A}_{12}\right)^{-1}   {B}_2)   (x_f\otimes{u}).  
\end{split}
\end{align*}
For notational simplicity, we define $    {G}=    {E}_{11}^{-1}    {A}_{12}\left(    {A}_{12}^\top    {E}_{11}^{-1}    {A}_{12}\right)^{-1}    {B}_2$ and use the fact that $    {\Pi}^\top    x_f=    x_f$ to arrive at
\begin{align}
\label{eq:DAE_3}
    {\Pi}    {E}_{11}    {\Pi}^\top\dot{    {x}}_f&=    {\Pi}    {A}_{11}    {\Pi}^\top    x_f+    {\Pi}    {N}\kronF{(    {\Pi}^\top    x_f)}{2}+    {\Pi N}\kronF{    {G}}{2}    \kronF{u}{2}-    {\Pi N}(    {Gu}\otimes     {\Pi}^\top    x_f)    \\&~ -    {\Pi N}(    {\Pi}^\top    x_f\otimes     {Gu})  +    {\Pi}    {B}    {u}. \nonumber
\end{align}
We define $x_d=    {\Theta}_l^\top    x_f$, multiply (\ref{eq:DAE_3}) by $ {\Theta}_r$, and use the fact that $    {\Theta}_r^\top    {\Pi}=    {\Theta}_r^\top$ to get 
\begin{align}
\label{eq:DAE_4}
    {\Theta}_r^\top    {E}_{11}    {\Theta}_r \dot{    {x}}_d=& ~   {\Theta}_r^\top    {A}_{11}    {\Theta}_r{    {x}}_d+    {\Theta}_r^\top    {N}\kronF{    {\Theta}_r}{2}\kronF{{    {x}}_d}{2}-    {    {\Theta}_r^\top N}
(    {G}\otimes     {\Theta}_r)(    {u}\otimes {    {x}}_d)    \\
&-    {    {\Theta}_r^\top N}(    {\Theta}_r\otimes     {G})({    {x}}_d\otimes     {u})  +    {    {\Theta}_r^\top N}\kronF{G}{2}    \kronF{u}{2}+    {\Theta}_r^\top    {B}    {u}. \nonumber
\end{align}
Since we can show that $\Theta_r^\top E_{11}\Theta_r$ is non-singular, equation (\ref{eq:DAE_4}) can be turned to an ODE in $\mathbb{R}^{n_1-n_2}$ and can also be written as
\begin{align}
\label{eq:DAE_5}
    {E}_d\dot{{    {x}}}_d&=    {A}_d{    {x}}_d+    {N}_d\kronF{{    {x}}_d}{2}+    H_1(x_d\otimes{u})+H_2(u\otimes{x_d})+    S_d    \kronF{u}{2}+B_du,
\end{align} 
where 
\begin{align*}
{E}_d&={\Theta}_r^\top  {E}_{11}{\Theta}_r & {A}_d&={\Theta}_r^\top{A}_{11}     {\Theta}_r\\
{N}_d&={\Theta}_r^\top{N}\kronF{{\Theta}_r}{2} &
S_d&=\Theta_r^\top N\kronF{G}{2} \\
H_1&=-\Theta^\top_rN(\Theta_r\otimes{G}) & H_2&=-\Theta^\top_rN(G\otimes{\Theta_r})\\
B_d&=\Theta_r^\top B.
\end{align*}
On the other hand, the algebraic part is given by 
\begin{equation} 
\label{eq:Dae_constraint1}   
x_2=    {F}_{a}(    x_d,    {u},\dot{u})
\end{equation}
and
\begin{equation}
\label{eq:Dae_constraint2}  
    {x}_g=-   {E}_{11}^{-1}   {A}_{12}\left(   {A}_{12}^\top   {E}_{11}^{-1}   {A}_{12}\right)^{-1}{B}_2   {u} = -{G} {u}, 
\end{equation}
where ${F}_{a}$ is a linear function in terms of $x_f$, $u$ and $\dot{u}$. Both (\ref{eq:Dae_constraint1}) and (\ref{eq:Dae_constraint2}) satisfy the assumptions presented in Section \ref{sec:II.D} but are irrelevant to the optimal control derivation. The output equation can be written in terms of $x_d$
\begin{equation*}
    {y}=   C_d{x}_d+   D_d{u}  
\end{equation*}
where $    {C}_d=    {C}_1    \Theta_r$ and $D_d=-{C_1}   {E}_{11}^{-1}   {A}_{12}\left(   {A}_{12}^\top   {E}_{11}^{-1}   {A}_{12}\right)^{-1}   {B}_2$. It can be easily checked that the obtained system is an s-free system that also satisfies Hypothesis 1 with $\mu=1$ and also satisfies the assumptions in Section \ref{sec:II.D}. We can now reformulate the DAE optimal control problem as an ODE optimal control problem with decoupled constraints given by (\ref{eq:Dae_constraint1}) and (\ref{eq:Dae_constraint2}). Since $x_d$ represents the dynamical states of the DAE system, we will only consider the state to output component of the output that is equivalent to (\ref{eq:NaSt_DAE}) with $y=C_1x_1-D_du$ instead of $y=C_1x_1$.  The s-free DAE optimal control problem is now defined as
\begin{equation}
\begin{split}
\label{eq:Opt1}
        \mathcal{E}(    x_d^0)\equiv\min_{\substack{u \in {\cal U}^+ \\ x_d(0) = x_d^0 \\ x_d(\infty) = 0}}\dfrac{1}{2}\int_0^{\infty}||  {y}(t)||^2+\dfrac{1}{\eta}||    {u}(t)||^2dt
        =\dfrac{1}{2}\int_0^{\infty}||    {y}^*(t)||^2+\dfrac{1}{\eta}||    {u}^*(t)||^2dt,
        \end{split}
\end{equation}
subject to the following s-free DAE constraints, 
\begin{equation}
\label{eq:opt}
\begin{split}
    {E}_d\dot{{    {x}}}_d&=    {A}_d{    {x}}_d+    {N}_d\kronF{{    {x}}_d}{2}+    H_1(x_d\otimes{u})+H_2(u\otimes{x_d})+    {S}_d    \kronF{u}{2}+B_d u\\
    x_2&=    {F}_{a}(    x_d,    {u},\dot{u})\\
    {x}_g&=-   {E}_{11}^{-1}   {A}_{12}\left(   {A}_{12}^\top   {E}_{11}^{-1}   {A}_{12}\right)^{-1}{B}_2   {u} \\
    {y}&=   C_d{x}_d,           
\end{split}
\end{equation} 
{ where $x_d^0=\Theta_l^\top x^0 \in \mathbb{R}^{n_1-n_2}$ is the initial condition of the dynamical part of the s-free DAE}. Since the value function depends only on $x_d$, the dynamical part of the state of the system, the derived optimal control problem in (\ref{eq:Opt1}) and (\ref{eq:opt}) turns out to be a future energy function problem as described in Section \ref{sec:II.B}. { We observe that the challenge in the resulting ODE is the presence of bilinear terms and a quadratic control. To tackle this issue, we use the polynomial-quadratic regulator (PQR) framework developed in \cite{borggaard2021PQR}. PQR provides $u^*_\degd(t)$, a degree $\degd$  polynomial approximation of $u^*(t)$, the solution of the optimal control described in (\ref{eq:Opt1}) and (\ref{eq:opt}).}

\section{Monolithic Sparse Formulation}
The issue with the strangeness-free formulation, presented in Section~\ref{sec:projection}, is that the sparsity of the matrices ($E_{11}$, $A_{11}$, $N$, and even $B_1$), if available, is destroyed when applying the projections.  Therefore, while our approach is very effective for smaller problems, it is impractical for large systems. To overcome this issue, we propose a formulation for larger problems that computes the energy function coefficients using the original (sparse) matrices. For simplicity of this discussion, we will consider (\ref{eq:NaSt_DAE}) with $B_2=0$. The projection procedure in Section~\ref{sec:projection} leads to the following system
\begin{equation}
\label{eq:DAE_5_B_2=0}
\begin{split}
    {E}_d\dot{{    {x}}}_d&=    {A}_d{    {x}}_d+    {N}_d\kronF{{    {x}}_d}{2}+    B_d    {u}  \\ 
    {y}&=   C_d{x}_d, 
\end{split}
\end{equation} 
where 
\begin{displaymath}
 {E}_d={\Theta}_r^\top  {E}_{11}{\Theta}_r, \quad\!
{A}_d=\Theta_r^\top{A}_{11}     {\Theta}_r, \quad\!
{N}_d={\Theta}_r^\top{N}\kronF{{\Theta}_r}{2}, \quad\!
B_d={\Theta}_r^\top{B}_1\quad\!  \mbox{and} \quad\!  C_d= C_1{\Theta}_r
\end{displaymath}
with $E_d$ being  invertible. Therefore, the coefficients of the approximations of the energy functions satisfy Theorem 1 {\rm (\cite[Th.~7]{kramer2023nonlinear1})} and Theorem 2 {\rm (\cite[Th.~6]{kramer2023nonlinear1})}. As an example, we take the future energy function 
\begin{align}\label{eq:energy_p+1}
\mathcal{E}_{\degd+1}(x_d)&=\dfrac{1}{2}\sum_{i=2}^{\degd+1} w_i^\top \kronF{x_d}{i},
\end{align}
and the associated optimal control 
\begin{align}\label{eq:control_p}
u_\degd^*(x_d)&=-\dfrac{\eta}{2}B_d^\top E_d^{-\top}\sum_{i=1}^{\degd} (i+1)W_{i+1}\kronF{x_d}{i},
\end{align}
where the coefficients $    {w}_k = {\tt vec}(    {W}_k) \in \mathbb{R}^{n^k}$ for $2\leq k\leq \degd+1$ satisfy the following.  For $k=2$, $W_2$ satisfies the Riccati equation
\begin{equation}\label{eq:pro_riccati}
 0=    {A}_d^\top E_d^{-\top}       {W}_2+    {W}_2E_d^{-1}    {A}_d+    {C}_d^\top        {C}_d-\eta     {W}_2    E_d^{-1}{B}_d    {B}_d^\top     E_d^{-\top}{W}_2,     
\end{equation}
and for $k>2$, the coefficients $w_k$ solve the linear systems 
\begin{equation}\label{eq:pro_LS}
\begin{split}
\mathcal{L}_k((   E_d^{-1} {A}_d -\eta     E_d^{-1} {B}_d    {B}_d^\top E_d^{-\top}    {W}_2 )^\top)    {w}_k=
\ -\mathcal{L}_{k-1}(    (E_d^{-1}{N}_d)^\top )    {w}_{k-1}\\
+\cfrac{\eta}{4}\sum_{\substack{i,j>2\\ i+j=k+2}} ij \, {\tt vec}(    {W}_i^\top       E_d^{-1} {B}_d    {B}_d^\top E_d^{-\top}        {W}_j),
\end{split}
\end{equation}
along with additional symmetrization of the result. As mentioned above, if we are dealing with large-scale problems, the resulting (\ref{eq:pro_riccati}) and (\ref{eq:pro_LS}) are dense equations and not computationally tractable. To resolve this issue, we propose an approach based on undoing the projection and performing all calculations using the original matrices to preserve the sparsity if initially available. 
Since $E_d$ is invertible, we define $\widetilde{W}_k$ for $2\leq k\leq \degd$ as follows
\begin{displaymath}
    {W}_k={E}_{d}^{\top}  \widetilde{W}_k\kronF{E_{d}}{k-1},
\end{displaymath} 
using the Kronecker-vec property, we also have
\begin{align*}
 {w}_k&=\kronF{({E}_{d}^\top)}{k}  {\tilde{w}}_k.
\end{align*}
Applying these identities to (\ref{eq:pro_riccati}) and (\ref{eq:pro_LS}) leads to the following
\begin{equation}\label{eq:pro_riccati_2}
 0=    {A}_d^\top        \widetilde{W}_2E_d+   E_d^{\top} \widetilde{W}_2    {A}_d+    {C}_d^\top        {C}_d-\eta     E_d^\top \widetilde{W}_2    {B}_d    {B}_d^\top  \widetilde{W}_2 E_d     
\end{equation}
and, using the extended definition of ${\cal L}_k^E(M)$ in (\ref{eq:calL}),
\begin{equation}\label{eq:pro_LS_2}
\begin{split}
\mathcal{L}^{{E}_{d}^\top}_k({A}_{d}^\top-\eta {E}_{d}^{\top} {\widetilde{W}}_2 {B}_d {B}_d^\top){ {\tilde{w}}}_k=-\mathcal{L}^{ {E}_{d}^\top}_{k-1}({ {N}}_{d}^\top) {\tilde{w}}_{k-1}\hspace{1in}\\ \hspace{1in}
+\cfrac{\eta}{4}\sum_{\substack{i,j>2\\ i+j=k+2}} ij \, {\tt vec}(    (\kronF{E_d}{i-1})^\top\widetilde{W}_i^\top        {B}_d    {B}_d^\top        \widetilde{W}_j\kronF{E_d}{j-1}).  
\end{split}
\end{equation}
We will now factor $\Theta_r$ from the left and right to recover the original matrices, resulting in the following 
\begin{equation}\label{eq:sparse_riccati}
 0=    \Theta_r^\top({A}_{11}^\top        \widehat{W}_2E_{11}+   E_{11}^{\top} \widehat{W}_2    {A}_{11}+    {C}_1^\top        {C}_1-\eta     E_{11}^\top \widehat{W}_2    {B}_1    {B}_1^\top  \widehat{W}_2 E_{11})\Theta_r     
\end{equation}
and
\begin{equation}\label{eq:sparse_LS}
\begin{split}
(\kronF{\Theta_r}{k})^\top\mathcal{L}^{ {E}_{11}^\top}_k({A}_{11}^\top-\eta  {E}_{11}^{\top} {\widehat{W}}_2 {B}_1 {B}_1^\top){ {\hat{w}}}_k=(\kronF{\Theta_r}{k})^\top\Big(-\mathcal{L}^{{E}_{11}^\top}_{k-1}({N}^\top) {\hat{w}}_{k-1}\hspace{.2in}\\
\hspace{.7in}+\cfrac{\eta}{4}\sum_{\substack{i,j>2\\ i+j=k+2}}ij \, {\tt vec}(    (\kronF{E_{11}}{i-1})^\top\widehat{W}_i^\top        {B}_1    {B}_1^\top        \widehat{W}_j\kronF{E_{11}}{j-1})\Big),  
\end{split}
\end{equation}
where $\widehat{W}_2=\Theta_r {\widetilde{W}}_2\Theta_r^\top$ and $\hat{w}_k=\kronF{\Theta_r}{k}\tilde{w}_k$ for $3\leq k \leq \degd$. The solutions of the above linear systems should also be symmetrized. The resulting Riccati equation in (\ref{eq:sparse_riccati}) is a projected Riccati equation and the authors in \cite{BennerIndex2} presented an inexact low-rank Newton-ADI method for solving large-scale algebraic Riccati equations associated with DAEs of differential index-2. We can also generalize their approach to the case where $B_2\neq 0$ in (\ref{eq:NaSt_DAE}). We note that the algorithm in \cite{BennerIndex2} only requires the original matrices $A_{11}$, $E_{11}$, $B_1$, $B_2$, and $C_1$, in other words, it is a sparsity-preserving algorithm. In \cite{kramer2023nonlinear1}, it was shown that the linear systems in (\ref{eq:pro_LS}) each have a unique solution. Therefore, the linear systems introduced in (\ref{eq:sparse_LS}), which are reformulations of the linear systems in (\ref{eq:pro_LS}), also have unique solutions. We now develop the main result that explains how to manage the projections in the {  k-way Lyapunov }equations in (\ref{eq:sparse_LS}). Before our main theorem, we will introduce the following notation
\begin{definition}\label{def1}
    Let $k$ be a positive integer, and $A$ and $B$ be matrices of dimensions $n\times n$ and $n\times r$, respectively, we define $\mathcal{M}_k^{B}(A)$ as
\begin{equation}\label{eq:lyap_block}
\mathcal{M}_k^{B}(A)=\begin{bmatrix}
    \underbrace{A\otimes{I}  \otimes{I}\otimes{\cdots}\otimes{I}}_{k \text{ terms}} ~ & \underbrace{B\otimes{A}\otimes{I}\otimes{\cdots} \otimes{I}}_{k \text{ terms}} & \cdots &\underbrace{ B\otimes{B}\otimes{\cdots}\otimes{B}\otimes{A}}_{k \text{ terms}}
    \end{bmatrix}.
\end{equation}
We drop the superscript if $B=I_n$, where $I_n$ is the $n\times n$ identity matrix.
\end{definition}
The theorem at the heart of our contribution to the monolithic sparse formulation is as follows
\begin{theorem}\label{sparse_LS_theorem}
If $\hat{w}_k \in \mathbb{R}^{n_1^k}$, with $k>2$, is the unique solution of the linear system in (\ref{eq:sparse_LS}) then it is also uniquely determined by the solution of the following augmented linear system 
\begin{align}\label{eq:sparse_LS_3}
\begin{split}
   \begin{pmatrix}
    \mathcal{L}_k^{ {E}_{11}^\top}(A_c^\top) & \mathcal{M}_k^{\tilde{I}}(A_{12})  \\
     \left(\mathcal{M}_k^{\tilde{I}}(A_{12})\right)^\top  & 0
\end{pmatrix}
\begin{pmatrix}
     {\hat{w}}_k\\
     {\Omega}\\
\end{pmatrix} =
\begin{pmatrix}
     {b}\\
     {0}\\
   \end{pmatrix},
\end{split}
\end{align}   
where
\begin{align*}
{b}&=-\mathcal{L}^{{E}_{11}^\top}_{k-1}({N}^\top) {\hat{w}}_{k-1}+\cfrac{\eta}{4}\sum_{\substack{i,j>2\\ i+j=k+2}} ij \, {\tt vec}(    (\kronF{E_{11}}{i-1})^\top\widehat{W}_i^\top        {B}_1    {B}_1^\top        \widehat{W}_j\kronF{E_{11}}{j-1})
\end{align*}   
and 
\begin{align*}
    A_c={A}_{11}-\eta B_1B_1^\top\widehat{W}_2{E}_{11}.
\end{align*}
Note that $\Omega$ is a dummy vector, $\mathcal{M}_k^{\tilde{I}}(A_{12})$ is defined in (\ref{eq:lyap_block}) and $\tilde{I} \in \mathbb{R}^{n_1\times (n_1-n_2)}$ is constructed from the columns of $I_{n_1}$ corresponding to the position of a set of non-basis columns of the column space of $A_{12}^\top$.
\end{theorem}
The proof of the theorem is presented in the appendix
\begin{remark}
    For the case where $B_2\neq 0$, following the development in (\ref{eq:split})--(\ref{eq:output}), the coefficients can be solved using the same linear system as in (\ref{eq:sparse_LS_3}) with a different right hand side $b$. 
\end{remark}
\begin{remark}
    For the case where $B_2=0$, we can express the energy function $\mathcal{E}_{\degd+1}(x_d)$ defined in (\ref{eq:energy_p+1}) in terms of the original $x_1$ variables. We start with the fact that for $2\leq i\leq \degd+1$, $w_i$ is given by
    \begin{align*}
w_i^\top=\hat{w}_i^\top\kronF{(E_{11}\Theta_r)}{i}.
    \end{align*}
    Then
    \begin{align*}
        \mathcal{E}_{\degd+1}(x_d)&=\dfrac{1}{2}\sum_{i=2}^{\degd+1} w_i^\top \kronF{x_d}{i}=\dfrac{1}{2}\sum_{i=2}^{\degd+1} \hat{w}_i^\top \kronF{(E_{11}\Theta_rx_d)}{i}.
    \end{align*}
    We also have that $\Theta_r x_d=\Pi^\top x_f=x_f=x_1$, which leads to
    \begin{align*}
        \mathcal{E}_{\degd+1}(x_1)=\dfrac{1}{2}\sum_{i=2}^{\degd+1} \hat{w}_i^\top \kronF{(E_{11}x_1)}{i}.
    \end{align*}
     Similarly, to express the optimal degree $\degd$ control $u^*_\degd(x_d)$, defined in (\ref{eq:control_p}), in terms of the original $x_1$ and $B_1$,
     we follow the same derivation to get
     \begin{align*}
         u_\degd^*(x_d)&=-\dfrac{\eta}{2}B_d^\top E_d^{-\top}\sum_{i=1}^{\degd}(i+1)W_{i+1}\kronF{x_d}{i}
         =-\dfrac{\eta}{2}B_d^\top E_d^{-\top}\sum_{i=1}^{\degd}(i+1)E_d^\top \Theta_l^\top \widehat{W}_{i+1}\kronF{\Theta_l}{i}\kronF{E_d}{i}\kronF{x_d}{i}\\
         &=-\dfrac{\eta}{2}B_d^\top \sum_{i=1}^{\degd}(i+1)\Theta_l^\top \widehat{W}_{i+1}\kronF{\Theta_l}{i}\kronF{E_d}{i}\kronF{x_d}{i}\\
         &=-\dfrac{\eta}{2}B_1^\top \sum_{i=1}^{\degd}(i+1)\Theta_r\Theta_l^\top \widehat{W}_{i+1}\kronF{\Theta_l}{i}(\kronF{\Theta_r}{i})^\top\kronF{E_{11}}{i}\kronF{\Theta_r}{i}\kronF{x_d}{i}.
         \end{align*}
        Hence
    \begin{align*}
            u_\degd^*(x_1)=-\dfrac{\eta}{2}B_1^\top \sum_{i=1}^{\degd}(i+1)\Pi^\top \widehat{W}_{i+1}\kronF{\Pi}{i}\kronF{E_{11}}{i}\kronF{x_1}{i}. 
    \end{align*}
     Finally, using the fact that $\Pi^\top \widehat{W}_{i+1}\kronF{\Pi}{i}=\widehat{W}_{i+1}$, we get
     \begin{equation}
          u_\degd^*(x_1) =-\dfrac{\eta}{2}B_1^\top \sum_{i=1}^{\degd}(i+1) \widehat{W}_{i+1}\kronF{E_{11}}{i}\kronF{x_1}{i}.
     \end{equation}
     So, we can express both $u_\degd^*$ and $\mathcal{E}_{\degd+1}$ in terms of $x_1$ and the original matrices. For the case when $B_2\neq 0$, $u^*_\degd$ and $\mathcal{E}_{\degd+1}$ can be expressed in terms of the original matrices and $x_f$.
\end{remark}
\begin{remark}
  Without any algebraic constraints, systems of the form $E\dot{x} = Ax + N\kronF{x}{2} + Bu$ with invertible $E$ would satisfy the equation
  \begin{equation}\label{eq:PQR}
     \mathcal{L}_k^{E^\top}( A^\top-\eta {E}^\top {W}_2 BB^\top) \hat{w}_k = b
  \end{equation}
  with ${w}_k=\kronF{(E^\top)}{k} \hat{w}_k$, which can be seen by applying results from \cite{kramer2023nonlinear1} to the system $\dot{x} = E^{-1} Ax + E^{-1}N\kronF{x}{2} + E^{-1}Bu$, then multiplying the equation for the coefficients $\hat{w}_k$ by $\kronF{(E^\top)}{k}$.  Thus, equation (\ref{eq:sparse_LS_3}) can be seen as a natural generalization of (\ref{eq:PQR}) to the space of tensors where each mode-$i$ { matricization} of the tensor $W_k$ is in the kernel of $A_{12}^\top$.  This fact could also lead to the development of new algorithms.  
  \end{remark}
  
  \begin{remark}{ For the case where $E$ is singular, the system matrices $A$, $N$ and $B$ will dictate the s-index and there is no guarantees that an explicit decoupling of the DAE exists.  In principle, if there is a transformation to an s-free system with different structure, the development presented here could be applicable.
  }\end{remark}
\section{Numerical Results}
\subsection{Scalar example}
For the first example, we will approximate the optimal control $u^*(t)$, with a degree $\degd$ polynomial for a system of the form (\ref{eq:NaSt_DAE}) with $n_1=2$,  $n_2=1$.
\begin{align*}
   \dot{z}_1 +\dot{z}_2 &= z_1 + z_3 + \frac{1}{2}z_1^2 - 2z_1z_2 + \frac{1}{2}z_2^2 + u \\
   \dot{z}_2 &= -2z_2 + z_3 \\
   0         &= z_1 + z_2 \\
   y         &= z_2,
\end{align*}
which is in the form (\ref{eq:NaSt_DAE}) with
\begin{align*}
\begin{split}
    x_1&=\begin{pmatrix}z_1\\z_2\end{pmatrix}, \quad
    x_2=\begin{pmatrix}z_3\end{pmatrix},
    \quad
    E_{11}=\begin{pmatrix}
        1 &\phantom{-}1\\
        0 &\phantom{-}1
    \end{pmatrix},
    \quad 
    A_{11}=\begin{pmatrix}
        1 & \phantom{-}0\\
        0 &-2
    \end{pmatrix},
    \quad
    A_{12}=\begin{pmatrix}
        1\\
        1
    \end{pmatrix}, 
    \\
    N&=\begin{pmatrix}
         0.5 & -1 & -1 &  \phantom{-}0.5\\
         0    &     \phantom{-}0    &     \phantom{-}0      &   \phantom{-}0
    \end{pmatrix},\quad
    B_1=\begin{pmatrix}
        1\\
        0
    \end{pmatrix}, 
    \quad
    B_2=(0), \quad \text{ and } \quad
    C_1=\begin{pmatrix}
        0&1
    \end{pmatrix}.  
\end{split}    
\end{align*}
Note the symmetric form of the quadratic coefficient term $N$.
We compute $u^*_\degd$, polynomial feedback laws  of degrees $\degd=1,2,3,4,$ and $ 5$ with $\eta=10$. {  The quality of the approximation is determined by simulating the closed loop s-free DAE system with two different starting values $x_d^0=1$ and $x_d^0=-1$  and comparing the integral form of the value function (\ref{eq:Opt1}) approximated with an upper limit of $T=100$ and $u^*_\degd$ instead of $\infty$ and $u^*$, with its degree $\degd+1$ polynomial approximation (\ref{eq:energy_p+1}) evaluated at $x_d^0$}. The results are presented in Tables \ref{tab:case11} and \ref{tab:case12}. As expected, increasing the degree of approximation tends to give better results, minimizing both control cost and its associated energy/value function approximation error.

\begin{table}[ht]
\centering
\caption{\label{tab:case11} Relative and absolute errors versus the degree of approximation for the scalar example with $x_d^0=-1$. }

\begin{tabular}{ |c|c|c|c|c| } 
\hline
 $\degd$ & Value function & Integral form & Absolute error & {Relative error}    \\ 
 \hline\hline
 $1$& $0.057916$ & $ 0.108050$ & $5.01 \times 10^{-2}$&  $46.400\%$ \\  
 \hline
 $2$& $0.082611$ & $0.092197$ & $9.59 \times 10^{-3}$& $10.397\%$ \\
 \hline
 $3$ & $0.090320$ & $0.091300$& $9.80 \times 10^{-4}$& $\phantom{0}1.073\%$ \\
 \hline
  $4$ & $0.091509$ & $0.091260$& $2.50 \times 10^{-4}$& $\phantom{0}0.274\%$ \\
 \hline
  $5$ & $0.091223$ & $0.091275 $& $5.21 \times 10^{-5}$& $\phantom{0}0.057\%$ \\
 \hline
\end{tabular}
\end{table}

\begin{table}[ht]
\centering
\caption{\label{tab:case12} Relative and absolute errors versus the degree of approximation for the scalar example with $x_d^0=1$. }
\begin{tabular}{ |c|c|c|c|c| } 
    \hline
 $\degd$ & Value function & Integral form & Absolute error & Relative error    \\ 
 \hline\hline
  $1$& $0.057916$ &$0.041139$& $1.67\times 10^{-2}$&   $40.779\%$ \\  
 \hline
 $2$& $0.033220$ & $0.039961$& $6.74 \times 10^{-3}$& $16.868\%$ \\
 \hline
 $3$& $0.040929$ & $0.039676 $& $1.25 \times 10^{-3}$& $\phantom{0}3.1602\%$\\
 \hline
  $4$ & $0.039740$ & $0.039666$& $7.73 \times 10^{-5}$& $\phantom{0}0.186\%$ \\
 \hline
  $5$ & $0.039453$ & $0.039668$& $2.14\times 10^{-4}$& $\phantom{0}0.541\%$ \\
 \hline
\end{tabular}
\end{table}

\subsection{Discretized boundary control of the Fisher equation}
As a second example, we study Dirichlet boundary control of the 1D Fisher equation on the unit interval
\begin{align}
\label{eq:fis_bvp}
\begin{split}
 w_t(t,\xi)=\alpha w_{\xi\xi}(t,\xi)+\beta w(t,\xi)(1-w(t,\xi))\hspace{.7in} \\
 \mbox{with} \qquad w(t,0)=u(t) \qquad \mbox{and} \qquad
 w(t,1)=0, \qquad t>0,
\end{split}  
\end{align}
 that minimizes the absolute value of the solution average 
\begin{align}
\label{eq:fis_obj}
        J(u) = \dfrac{1}{2}\int_0^\infty \left(\int_0^1 w(t,\xi) d\xi\right)^2 + \dfrac{1}{\eta} |u(t)|^2 dt,
\end{align}
(this objective function does not guarantee $w$ is small, but we observed zero limiting solutions in the cases where the energy function was finite).
We use a Galerkin finite element approximation with piecewise-linear basis functions and a uniform mesh with $N_e$ elements to perform the spatial discretization of the BVP (\ref{eq:fis_bvp}) and the objective function (\ref{eq:fis_obj}) and seek to control the discretized system. The resulting optimal control problem is of the form (\ref{eq:NaSt_DAE}) with $n_1=N_e$, $n_2=1$ and
\begin{displaymath}
E_{11}=M, \text{ and } A_{11}=-\alpha K+\beta M,
\end{displaymath}
where $M$ and $K$ are the mass and stiffness matrices resulting from the finite element approximation, $A_{12} = e_1$ is the first unit vector in $\mathbb{R}^{n_1}$,
\begin{displaymath}
B_1=0 \in \mathbb{R}^{n_1}, \quad
B_2=-1 \in \mathbb{R}^{n_2}, 
\end{displaymath}
and $C_1$ is the row vector where each component is $\dfrac{1}{N_e}$.
The matrix $N$ results from the quadratic term $-\beta w^2$ in (\ref{eq:fis_bvp}). The discretization of the objective function in (\ref{eq:fis_obj}) gives the following. 
\begin{equation}
\label{eq:Ex2_0bj}
  J(u) \approx J_h(u) = \dfrac{1}{2}\int_0^\infty  |C_1x_1(t)|^2 + \dfrac{1}{\eta}|u(t)|^2 dt.  
\end{equation}
Therefore, the value function is also the future energy function, and the discretized problem is an optimal control problem of the form described by (\ref{eq:NaSt_DAE}) and (\ref{eq:Opt0}).

\subsubsection{Case 1}
For the first case study, we will use the resulting system described above with $n_1=16$, $\alpha=0.1$, $\beta=3$, $\eta=0.1$.  For these parameter values, $A_{11}$ has two unstable eigenvalues.  This system is converted to an s-free DAE using the procedure described above. We compute $u_\degd^*$ for $\degd=1,2$ and $3$ and use a random initial condition ${x^0_d}$ in $[-0.3,0.3]^{15}$ given by
\begin{eqnarray*}
x_d^0&\approx& [
  \begin{matrix} 
   -0.038& 
   -0.284&
    0.030& 
   -0.039& 
   -0.048& 
   -0.101& 
   -0.177&
    0.072&  \cdots \end{matrix}\\
  && \hspace{1in} 
   \begin{matrix} \cdots & 
   -0.120& 
   -0.140& 
    0.073& 
    0.018& 
   -0.219& 
    0.008& 
   -0.189 \end{matrix}
 ]^\top  
\end{eqnarray*} 
(using the script {\tt rng(2); 0.3*2*(rand(n1-n2,1)-1/2*ones(n1-n2,1);})).
Note that this initial condition is not in the domain of attraction of the controlled system with linear state feedback.  However, both quadratic and cubic feedback laws stabilize the system from this initial condition $x^0_d$.  {  We also compare
the integral form of the value function (\ref{eq:Opt1}) approximated with an upper limit of $T=100$ and $u^*_\degd$ instead of $\infty$ and $u^*$, with its degree $\degd+1$ polynomial approximation (\ref{eq:energy_p+1}) evaluated at $x_d^0$}. Table \ref{tab:case3} shows a significant improvement in both control cost (integral form) as well as agreement between actual and predicted values (integral form and value function at $x_d^0$) when using cubic feedback over quadratic feedback for this example. 
\begin{table}[ht]
\centering
\caption{\label{tab:case3}Absolute and relative errors versus approximation degree for Dirichlet control of Fisher equation.}
\begin{tabular}{ |c|c|c|c|c| } 
 \hline
 $\degd$ & Value function & Integral form & Absolute error & Relative error \\ 
 \hline\hline
 $1$& $0.0689 $ & divergence & divergence&    divergence \\    
 \hline
 $2$& $0.1219$ & $0.1589$& $ 3.69 \times 10^{-2} $& $23.26 \%$ \\
 \hline
 $3$& $0.1461$ & $0.1562$& $ 1.00 \times 10^{-2}$& $\phantom{0}6.43 \%$ \\
 \hline
\end{tabular}
\end{table}

\subsubsection{Case 2}
In this second case, we sweep over a set of random initial conditions to get a general sense of how much larger the stability region grows with higher degree feedback laws.   For this reason, we consider the same system as in case 1 with $n_1=16, \alpha=0.1, \beta=1, \eta=1$ and we perform a study with 1000 different random initial conditions in the range $[-1,1]^{15}$. The number of times the closed-loop system is
unstable (or if was always stable) and the average relative error for each degree of approximation are reported in Table \ref{tab:case4}. Note that we did not include the unstable results
from the polynomial approximations in the reported averages
(since those simulations would not finish).
\begin{table}[ht] 
\centering
\caption{\label{tab:case4}Stability and average relative error of closed-loop systems from sampled initial guesses.}
\begin{tabular}{ |c|c|c|c| } 
 \hline
 Degree & 1&2&3 \\ 
 \hline\hline
 Average relative error & $50.46 \%$ &$31.09\%$ &$5.61 \%$  \\ 
 \hline
  Stability & Unstable 24/1000 & Stable & Stable  \\    
 \hline
\end{tabular}
\end{table}
In both of these test cases, we show that increasing the degree of the polynomial approximation to the optimal control gives better results in both predicting the optimal control cost (energy function) and the ability to stabilize equilibrium points. 
\subsubsection{Case 3}
In this last case, we will show the effectiveness of the method on a larger scale problem. We study the same problem with a different objective function and a fixed initial condition.  The Dirichlet boundary control of the 1D Fisher equation
on the unit interval
\begin{align}
\begin{split}
 w_t(t,\xi)=\dfrac{1}{2}w_{\xi\xi}(t,\xi)+5 w(t,\xi)(1-w(t,\xi))\hspace{.7in} \\
 \mbox{with} \qquad w(t,0)=u(t) \qquad \mbox{and} \qquad
 w(t,1)=0, \qquad t>0,\\
 w(0,x)=0.05\cos(2\pi x) \hspace{.7in} x\in [0,1]
\end{split}  
\end{align}
 that minimizes the value of the solution average 
\begin{align}
\label{eq:fis_obj_1}
        J(u) = \dfrac{1}{2}\int_0^\infty \left(\int_0^1 w^2(t,\xi) d\xi\right) + \dfrac{1}{\eta} u^\top(t)u(t)\, dt,
\end{align}
with $\eta=10^{-3}$. We pick $n_1=1024$ and consider both the state to output and input to output components. When $k=3$, the resulting linear system described in equation (\ref{eq:sparse_LS_3}) that is part of Theorem \ref{sparse_LS_theorem}, has size $2n_1^3-(n_1-n_2)^3=1,076,884,481$. The system was solved using GMRES(10) and a preconditioner. In this case, we compute $u^*_\degd$, the polynomial feedback laws with $\degd=1,2$.  For performance, we also compare
the integral form of the value function (\ref{eq:Opt1}) approximated with an upper limit of $T=100$ and $u^*_\degd$ instead of $\infty$ and $u^*$, with its degree $\degd+1$ polynomial approximation (\ref{eq:energy_p+1}) evaluated at $x_d^0$. The results are reported in Table \ref{tab:case5}, and for this case as well, we clearly see that the quadratic controller performs better than the linear controller in both the control cost and the agreement between actual and predicted values.

\begin{table}[ht]
\caption{\label{tab:case5}Absolute and relative errors versus approximation degree for Dirichlet control of Fisher equation.}
\centering
\begin{tabular}{|c|c|c|c|c|} 
 \hline
 $\degd$ & Value function & Integral form & Absolute error & Relative error \\ 
 \hline\hline
 $1$& $0.0036601 $ &  $0.016362$  & $0.012702$&    $77.631 \%$  \\    
 \hline
 $2$& $0.0059891$ & $0.006221$& $ 0.000232 $& $\phantom{0}3.726 \%$ \\
 \hline
\end{tabular}
\end{table}

\section{Conclusions and Future Work}
We have now established two strategies for computing polynomial coefficients for future and past energy function approximations when the input-output system is described by a DAE with a Stokes-type structure.  The first approach uses the strangeness-framework for DAEs to convert the system to an s-free form from which the algorithms used in \cite{kramer2023nonlinear1} can be applied (with some modifications to include feedthrough or quadratic inputs in the most general cases).  The effectiveness of this approach was demonstrated through two numerical studies.  The first example was of the case $B_2=0$ and showed benefits of adding higher degree feedback terms, but also showed that even quadratic feedback laws can expand the stability radius about an equilibrium point.  The second example was the discretization of a Dirichlet boundary control problem where the Dirichlet boundary condition is imposed as a constraint and leads to a Stokes-type DAE with $B_2\neq 0$.  The reduction to an s-free system and subsequent calculation of polynomial feedback laws up to degree 3 echoed the improvements seen in the first example.

{  We note that dense matrices were effectively needed to achieve this index reduction, which limits the applicability of this method to relatively small problem sizes, depending on the approximation degree desired.  To alleviate this, we developed the second approach, a monolithic sparse formulation, that while larger, works with the original matrices. If these are sparse, then matrix-free solvers are attractive and should open up the applicability to much larger systems. The effectiveness of the sparse approach was demonstrated in the last numerical experiment, a Dirichlet boundary control problem using the semi-discretized Fisher equation with $1024$ degrees of freedom.}

\section{Appendix: proof of Theorem \ref{sparse_LS_theorem}}
Before starting the proof we will introduce and prove the following lemmas
\begin{lemma}\label{lemma_comb}
    For any positive integers $k$, $r_2$, and $n_1$ with $n_1>r_2$, we have the following identity
\begin{equation}\label{eq:identity1}
    r_2\sum_{i=1}^{k} n_1^{k-i}(n_1-r_2)^{i-1}=n_1^k-(n_1-r_2)^k  . 
    \end{equation}
\end{lemma}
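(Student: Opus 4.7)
The plan is to recognize the right-hand side as a telescoping difference and apply the standard algebraic factorization of $a^k - b^k$. Specifically, for any two scalars $a$ and $b$ and any positive integer $k$, one has the well-known identity
\begin{equation*}
    a^k - b^k = (a-b)\sum_{i=1}^{k} a^{k-i} b^{i-1}.
\end{equation*}
Setting $a = n_1$ and $b = n_1 - r_2$ makes $a - b = r_2$, and the sum on the right becomes precisely $\sum_{i=1}^{k} n_1^{k-i}(n_1-r_2)^{i-1}$, which delivers the claim immediately.

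For completeness, I would either invoke this factorization as standard or give a one-line inductive verification: the base case $k=1$ reads $r_2 = n_1 - (n_1 - r_2)$, which is trivial. For the inductive step, assuming the identity at level $k$, I would split off the last term of the sum at level $k+1$, writing
\begin{equation*}
    r_2 \sum_{i=1}^{k+1} n_1^{k+1-i}(n_1-r_2)^{i-1} = n_1 \cdot r_2 \sum_{i=1}^{k} n_1^{k-i}(n_1-r_2)^{i-1} + r_2 (n_1-r_2)^{k},
\end{equation*}
then apply the inductive hypothesis to the first term to get $n_1(n_1^k - (n_1-r_2)^k) + r_2(n_1-r_2)^k = n_1^{k+1} - (n_1 - r_2) (n_1 - r_2)^k = n_1^{k+1} - (n_1-r_2)^{k+1}$, as required.

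There is essentially no obstacle here; the lemma is a purely algebraic identity, and its only role in the sequel is apparently combinatorial bookkeeping for the dimensions of the block structure in $\mathcal{M}_k^{B}(A)$ from Definition~\ref{def1}. The hypothesis $n_1 > r_2$ is not actually needed for the identity itself and presumably reflects the intended interpretation (with $n_1$ and $n_1 - r_2$ representing dimensions of ambient and projected spaces in the subsequent main theorem).
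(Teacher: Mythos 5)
Your proof is correct and essentially matches the paper's, which establishes the identity by induction on $k$; your inductive step merely splits off the last term of the sum where the paper splits off the first, and your observation that the whole lemma is the standard factorization $a^k-b^k=(a-b)\sum_{i=1}^{k}a^{k-i}b^{i-1}$ with $a=n_1$, $b=n_1-r_2$ is a clean, equivalent shortcut. You are also right that the hypothesis $n_1>r_2$ is not needed for the algebra and only reflects the dimension bookkeeping in the later rank arguments.
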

\begin{proof}
    We will proceed by  induction on $k$. For $k=1$, we have
    \begin{align*}
        r_2\sum_{i=1}^{1} n_1^{1-i}(n_1-r_2)^{i-1}=r_2= n_1-(n_1-r_2).
    \end{align*}
    We now assume that the expression is true for positive integer $k$, thus
    \begin{align*}
    r_2\sum_{i=1}^{k} n_1^{k-i}(n_1-r_2)^{i-1}=n_1^k-(n_1-r_2)^k.   
    \end{align*}
    and show that it holds for $k+1$. Consider the sum below where 
    we isolate a term and re-index
    \begin{align*}
    \sum_{i=1}^{k+1} n_1^{k+1-i}(n_1-r_2)^{i-1}=n_1^{k}+\sum_{i=2}^{k+1} n_1^{k+1-i}(n_1-r_2)^{i-1}=n_1^{k}+\sum_{i=1}^{k} n_1^{k-i}(n_1-r_2)^i.
    \end{align*}
    Multiplying both sides by $r_2$
    \begin{align*}
    r_2\sum_{i=1}^{k+1} n_1^{k+1-i}(n_1-r_2)^{i-1}=r_2n_1^{k}+r_2(n_1-r_2)\sum_{i=1}^{k} n_1^{k-i}(n_1-r_2)^{i-1}
    \end{align*}
    and using the inductive hypothesis leads to
    \begin{align*}
    r_2\sum_{i=1}^{k+1} n_1^{k+1-i}(n_1-r_2)^{i-1}=r_2n_1^{k}+(n_1-r_2)(n_1^k-(n_1-r_2)^k)=n_1^{k+1}-(n_1-r_2)^{k+1}.
    \end{align*}
This proves the identity for $k+1$ and completes the proof.
\end{proof}
\begin{lemma}\label{lemma_R}
    Let $A_{12} \in \mathbb{R}^{n_1\times n_2}$ be a matrix with rank $r_2$ and $n_1>n_2$ and $\tilde{I} \in \mathbb{R}^{n_1\times (n_1-r_2)}$ is constructed from the columns of $I_{n_1}$ corresponding to a non-basis set of columns for the column space of $A_{12}^\top$. Then, we have that
    \begin{align*}
        \operatorname{rank}(\widetilde{R})=n_1-r_2,
    \end{align*}
    where $\widetilde{R}=R\tilde{I}$ with $R=I_{n_1}-A_{12}A^\dagger_{12}$.
\end{lemma}
\begin{proof}
    Let $x \in \mathbb{R}^{n_1-r_2}$ such that $\widetilde{R}x=0$. Hence,
    \begin{align*}
        \widetilde{R}x=0 \implies R\tilde{x}=0,
    \end{align*} 
    with $\tilde{x}=\tilde{I}x$. Therefore,
    \begin{align*}
     \tilde{x} \in \operatorname{col}(A_{12}) \implies \tilde{x}=A_{12}y_1
    \end{align*}
for some $y_1 \in \mathbb{R}^{n_2}$. From the definition of $\tilde{x}$, we also have that
\begin{align*}
    \tilde{x}\in \operatorname{col}(\tilde{I}). 
\end{align*} 
Now let $\Pi_1$ be a permutation matrix such that $A^\top_{12}\Pi_1=\begin{pmatrix}
    N_1&M_1
\end{pmatrix}$, where $N_1\in \mathbb{R}^{n_2\times r_2}$ and $M_1\in \mathbb{R}^{n_2\times n_1-r_2}$ with $\operatorname{rank}(N_1)=r_2$. This permutation ensures that the first $r_2$ columns of $A^\top_{12}\Pi_1$ are a basis for the column space of $A^\top_{12}$. Using the permutation defined above, we get 
\begin{align*}
    \Pi_1^\top\tilde{x}=\Pi_1^\top A_{12}y_1=\begin{pmatrix}
        N_1^\top y_1\\
        M_1^\top y_1
    \end{pmatrix}=\Pi_1^\top\tilde{I}x=\begin{pmatrix}
        0\\
        I_{n_1-r_2}
    \end{pmatrix}x=\begin{pmatrix}
        0\\
        x
    \end{pmatrix},
\end{align*}
which implies that $N_1^\top y_1=0$ and $M_1^\top y_1=x$. Now since the columns of $M_1$ are linear combinations of the columns of $N_1$, we have that $M_1=N_1H$, for some matrix $H$. This leads to $H^\top N_1^\top y_1=0=M_1^\top y_1=x$. Hence $x=0$, which implies that $\widetilde{R}$ is full rank with $\operatorname{rank}(\widetilde{R})=n_1-r_2$ and completes the proof.
\end{proof}

\begin{lemma}\label{lemma_rank}
    For any integer $k$ and  $A_{12}\in \mathbb{R}^{n_1\times n_2}$, both matrices  $\mathcal{M}_k(A_{12})\in \mathbb{R}^{n_1^k\times kn_2n_1^{k-1}}$ and $\mathcal{M}_k^{\tilde{I}}(A_{12}) \in \mathbb{R}^{n_1^k\times d_{2}}$, given in Definition~\ref{def1},  have the same rank
    \begin{align*}
        {\rm rank}(\mathcal{M}_k^{\tilde{I}}(A_{12}))={\rm rank}(\mathcal{M}_k(A_{12}))=n_1^k-(n_1-r_2)^k  
    \end{align*}
where $r_2={\rm rank}(A_{12})$, $d_2
=n_2\sum_{i=1}^{k}n_1^{k-1}(n_1-r_2)^{i-1}$ and $\tilde{I} \in \mathbb{R}^{n_1\times (n_1-r_2)}$ is constructed from the columns of $I_{n_1}$ corresponding to a non-basis set of columns for the column  space of $A_{12}^\top$. When $A_{12}$ is full column rank, then $(\mathcal{M}_k^{\tilde{I}}(A_{12}))$ is also full column rank with
\begin{align*}
        \text{rank}(\mathcal{M}_k^{\tilde{I}}(A_{12}))=\text{rank}(\mathcal{M}_k(A_{12}))=d_2=n_1^k-(n_1-n_2)^k  
    \end{align*}
    Remark: even when $A_{12}$ is full column rank, $\mathcal{M}_k(A_{12})$ is not.
\end{lemma}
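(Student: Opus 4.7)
The plan is to compute both ranks simultaneously via a direct-sum decomposition of $(\mathbb{R}^{n_1})^{\otimes k}$. First I would establish that $\mathbb{R}^{n_1} = V \oplus W$ where $V = \mathrm{col}(A_{12})$ and $W = \mathrm{col}(\tilde I)$. This is the only place where the specific construction of $\tilde I$ enters: if $P \subseteq [n_1]$ is the row-index set corresponding to the pivot columns of $A_{12}^\top$, then $\mathrm{col}(\tilde I) = \mathrm{span}\{e_i : i \notin P\}$. The restriction map $v \mapsto v|_P$ on $V$ is injective, because the non-pivot rows of $A_{12}$ are linear combinations of the pivot rows, so $A_{12}w$ vanishing on $P$ forces $A_{12}w = 0$. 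Combined with the dimension count $r_2 + (n_1 - r_2) = n_1$, the splitting is direct.

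Using this splitting, $(\mathbb{R}^{n_1})^{\otimes k} = \bigoplus_{S \subseteq [k]} X_S$, where $X_S$ has $V$ in the positions indexed by $S$ and $W$ elsewhere, so $\dim X_S = r_2^{|S|}(n_1-r_2)^{k-|S|}$. The core of the argument is then to identify the column space of each block of the two matrices with a union of these $X_S$ pieces. The $i$-th block of $\mathcal{M}_k(A_{12})$ has column space $(\mathbb{R}^{n_1})^{\otimes(i-1)} \otimes V \otimes (\mathbb{R}^{n_1})^{\otimes(k-i)}$, and expanding the identity factors yields $\bigoplus_{S \ni i} X_S$. Summing over $i$ gives $\bigoplus_{S \neq \emptyset} X_S$. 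For $\mathcal{M}_k^{\tilde I}(A_{12})$, the $i$-th block has column space $W^{\otimes(i-1)} \otimes V \otimes (\mathbb{R}^{n_1})^{\otimes(k-i)}$, which instead expands to $\bigoplus_{\min S = i} X_S$. Since the sets $\{S : \min S = i\}$ partition the nonempty subsets of $[k]$, summing over $i$ again recovers $\bigoplus_{S \neq \emptyset} X_S$. Thus both column spaces coincide, and their common dimension is $\sum_{S \neq \emptyset} r_2^{|S|}(n_1-r_2)^{k-|S|} = n_1^k - (n_1 - r_2)^k$ by the binomial theorem.

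For the full-column-rank claim, I would count columns block by block: the $i$-th block of $\mathcal{M}_k^{\tilde I}(A_{12})$ contributes $(n_1-r_2)^{i-1} n_2 n_1^{k-i}$ columns, so $d_2 = n_2 \sum_{i=1}^{k} n_1^{k-i}(n_1-r_2)^{i-1}$. When $A_{12}$ is full column rank, $n_2 = r_2$, and Lemma~\ref{lemma_comb} directly gives $d_2 = n_1^k - (n_1-r_2)^k$, matching the rank computed above; hence $\mathcal{M}_k^{\tilde I}(A_{12})$ is full column rank. The accompanying remark for $\mathcal{M}_k(A_{12})$ follows from comparing $kn_2n_1^{k-1}$ to $n_1^k - (n_1-n_2)^k$, which are unequal except in trivial cases.

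The main conceptual obstacle is the partitioning step: one must see that replacing the leading identity factors by $\tilde I$ removes exactly the overcounting that prevents $\mathcal{M}_k(A_{12})$ from being full column rank, while preserving the column space. Everything else, including the use of Lemma~\ref{lemma_comb}, is bookkeeping once the $V \oplus W$ decomposition is in place.
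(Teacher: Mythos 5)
Your proof is correct, and it takes a genuinely different route from the paper. The paper works purely with ranks: it applies the identity $\mathrm{rank}([A~~B])=\mathrm{rank}(A)+\mathrm{rank}(B-AA^\dagger B)$ recursively across the $k$ blocks, which peels off a factor of the orthogonal projector $R=I_{n_1}-A_{12}A_{12}^\dagger$ at each step and reduces both rank computations to the combinatorial identity of Lemma~\ref{lemma_comb}; the only role of the specific choice of $\tilde I$ there is to make $\tilde R=R\tilde I$ full column rank. You instead build the direct sum $\mathbb{R}^{n_1}=\mathrm{col}(A_{12})\oplus\mathrm{col}(\tilde I)$ (your injectivity argument for the restriction to the pivot rows is the correct justification, and is the same underlying fact that makes $\tilde R$ full rank in the paper), decompose $(\mathbb{R}^{n_1})^{\otimes k}$ into the $2^k$ summands $X_S$, and identify the column space of each block exactly. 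This is a stronger conclusion than the paper's: you show the two matrices have \emph{identical} column spaces $\bigoplus_{S\neq\emptyset}X_S$, not merely equal rank, which also makes Remark~\ref{remark_rank} immediate, and it replaces the induction of Lemma~\ref{lemma_comb} with the binomial theorem. What the paper's pseudoinverse-based bookkeeping buys in exchange is reuse: the projectors $P_{\hat Z_i^k}$ and the recursive rank-splitting set up in this proof are exactly the tools needed again in Lemma~\ref{lemma5} and in the rank computation for the augmented saddle-point system in Theorem~\ref{sparse_LS_theorem}, so the two approaches are complementary rather than one subsuming the other. One small point worth making explicit if you write this up: the final inequality $kn_2n_1^{k-1}>n_1^k-(n_1-n_2)^k$ for $k\geq 2$ and $0<n_2<n_1$ deserves a one-line justification (e.g., the mean value theorem applied to $t\mapsto t^k$ on $[n_1-n_2,\,n_1]$).
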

\begin{proof}
For both rank calculations we will use the identity in {\rm (\cite[Th.~5]{Matsaglia01011974})} 
\begin{equation}\label{eq:rank_id}
\text{rank}([A~~B])=\text{rank}(A)+\text{rank}(B-AA^\dagger B)= \text{rank}(B)+\text{rank}(A-BB^\dagger A)
  \end{equation}
where $A^\dagger$ and $B^\dagger$ are pseudo-inverses of $A$ and $B$, respectively. Using (\ref{eq:rank_id}) on  $\mathcal{M}_k(A_{12})$ results in 
\begin{align*}
\begin{split}
    \text{rank}(\mathcal{M}_k(A_{12}))&=\text{rank}\left(\begin{bmatrix}\underbrace{A_{12}\otimes{I_{n_1}}\otimes \cdots \otimes{I_{n_1}}}_{k\text{ times}}\end{bmatrix}\right)\\&+\text{rank}\left(\begin{bmatrix}R\otimes{\underbrace{A_{12}\otimes{I_{n_1}}\otimes{\cdots}\otimes{I_{n_1}}}_{k-1 \text{ times}}}|\cdots| R\otimes{\underbrace{I_{n_1}\otimes{I_{n_1}}\otimes{\cdots}\otimes A_{12}}_{k-1\text{ times}}}
    \end{bmatrix}\right)\\
    &=r_2n_1^{k-1}\\&+\text{rank}\left(\begin{bmatrix}R\otimes{\underbrace{A_{12}\otimes{I_{n_1}}\otimes{\cdots}\otimes{I_{n_1}}}_{k-1 \text{ times}}}|\cdots| R\otimes{\underbrace{I_{n_1}\otimes{I_{n_1}}\otimes{\cdots}\otimes A_{12}}_{k-1\text{ times}}}
    \end{bmatrix}\right),
\end{split}
\end{align*}
where \begin{equation}\label{eq:R}
    R=I_{n_1}-A_{12}A_{12}^\dagger.
\end{equation}
with $\text{rank}(R)=r_{R}=n_1-r_2$. Using the same identity over the $k-1$ remaining blocks  and Lemma~\ref{lemma_comb}, we get
\begin{align}
    \text{rank}(\mathcal{M}_k(A_{12}))=r_2\sum_{i=1}^kn_1^{k-i}r_R^{i-1} =r_2\sum_{i=1}^kn_1^{k-i}(n_1-r_2)^{i-1} =n_1^k-(n_1-r_2)^k.
\end{align}
  Applying the same approach on $\mathcal{M}_k^{\tilde{I}}(A_{12})$ results in a similar expression 
\begin{align*}
    \text{rank}(\mathcal{M}^{\tilde{I}}_k(A_{12}))=r_2\sum_{i=1}^kn_1^{k-i}r_{\widetilde{R}}^{i-1}
\end{align*}  
where $\widetilde{R}=R\tilde{I}$  with ${\rm rank}(\widetilde{R})=n_1-r_2$, shown in Lemma~\ref{lemma_R}. Therefore
\begin{align}
    \text{rank}(\mathcal{M}_k^{\tilde{I}}(A_{12}))=r_2\sum_{i=1}^kn_1^{k-i}(n_1-r_2)^{i-1}=n_1^k-(n_1-r_2)^k,
\end{align}
where we have again resorted to Lemma~\ref{lemma_comb}.  Thus the ranks of both matrices are the same and equal to $n_1^k-(n_1-r_2)^k$.
If $A_{12}$ has full column rank, we can replace $r_2$ with $n_2$ to complete the proof.
\end{proof}
\begin{remark}\label{remark_rank}
    An important observation to make is because of the structure of the two matrices, the columns of $\mathcal{M}^{\tilde{I}}_k(A_{12})$ are a sub-collection of the columns of $\mathcal{M}_k(A_{12})$ and when $A_{12}$ is full rank, they constitute a basis for the column space of $\mathcal{M}^{\tilde{I}}_k(A_{12})$. 
\end{remark}
\begin{lemma}\label{lemma_per}
    Let $k,n_1$ and $n_2$ be a fixed positive integer and $\mathcal{M}_k^{R}(A_{12})$ given in Definition~\ref{def1} where $A_{12}\in \mathbb{R}^{n_1\times n_2}$ with $n_1>n_2$. If we define $\hat{Z}^k_i$ to be the $i^\text{th}$ column block of $\mathcal{M}_k^{R}(A_{12})$ and $P_{\hat{Z}^k_i}=I_{n_1^k}-\hat{Z}^k_i(\hat{Z}_i^k)^\dagger$, then the following relationship holds 
\begin{displaymath} P_{\hat{Z}^k_1}P_{\hat{Z}^k_2}\cdots P_{\hat{Z}^k_k}=\kronF{R}{k}
\end{displaymath}
with $R=I_{n_1}-A_{12}A_{12}^\dagger$.
\end{lemma}
\begin{proof}
We will again rely on induction.  For $k=1$, 
\begin{displaymath}
    P_{\hat{Z}_1^1}=I_{n_1}-A_{12}A_{12}^\dagger \qquad \mbox{and} \qquad \kronF{R}{1}=I_{n_1}-A_{12}A_{12}^\dagger.
\end{displaymath}
Now we assume that the statement
\begin{align*}
     P_{\hat{Z}^k_1}P_{\hat{Z}^k_2}\cdots P_{\hat{Z}^k_k}=\kronF{R}{k}
\end{align*}
 is true for $k$ and will show that it is also true for $k+1$. Note that
 \begin{align*}
   A_{12}A_{12}^\dagger R=A_{12}A_{12}^\dagger(I_{n_1}-A_{12}A_{12}^\dagger)=A_{12}A_{12}^\dagger-A_{12}A_{12}^\dagger=0\\ 
   RA_{12}A_{12}^\dagger =(I_{n_1}-A_{12}A_{12}^\dagger)A_{12}A_{12}^\dagger=A_{12}A_{12}^\dagger-A_{12}A_{12}^\dagger=0,
 \end{align*} 
 which implies that
 \begin{align*} 
 \hat{Z}_i^{k+1}(\hat{Z}_i^{k+1})^\dagger\hat{Z}_j^{k+1}(\hat{Z}^{k+1}_j)^\dagger=0\text{  when  } i\neq j
 \end{align*}
 and all the cross terms in the product $P_{\hat{Z}_1^{k+1}}P_{\hat{Z}_2^{k+1}}\cdots P_{\hat{Z}_{k+1}^{k+1}}$ vanish. Therefore, we have 
 \begin{align*}
P_{\hat{Z}_1^{k+1}}P_{\hat{Z}_2^{k+1}}\cdots P_{\hat{Z}_{k+1}^{k+1}}&=I_{n_1^{k+1}}-\hat{Z}_1^{k+1}(\hat{Z}_1^{k+1})^\dagger-\hat{Z}_2^{k+1}(\hat{Z}_2^{k+1})^\dagger-\cdots-\hat{Z}_{k+1}^{k+1}(\hat{Z}_{k+1}^{k+1})^\dagger   
 \end{align*} 
 and the same relationship holds for the induction hypothesis
\begin{align*}
 P_{\hat{Z}_1^{k}}P_{\hat{Z}_2^{k}}\cdots P_{\hat{Z}_{k}^{k}}&=I_{n_1^k}-\hat{Z}_1^{k}(\hat{Z}_1^{k})^\dagger-\hat{Z}_2^{k}(\hat{Z}_2^{k})^\dagger-\cdots-\hat{Z}_{k}^{k}(\hat{Z}_{k}^{k})^\dagger.   
 \end{align*}
 From the above equations and the definitions of $\hat{Z}_i^k$ and $\hat{Z}_i^{k+1}$, we observe that 
 \begin{align*}
     &I_{n_1^{k+1}}-\hat{Z}_1^{k+1}(\hat{Z}_1^{k+1})^\dagger-\hat{Z}_2^{k+1}(\hat{Z}_2^{k+1})^\dagger-\cdots-\hat{Z}_{k+1}^{k+1}(\hat{Z}_{k+1}^{k+1})^\dagger\\
     =&(I_{n_1^k}-\hat{Z}_1^{k}(\hat{Z}_1^{k})^\dagger-\hat{Z}_2^{k}(\hat{Z}_2^{k})^\dagger-\cdots-\hat{Z}_{k}^{k}(\hat{Z}_{k}^{k})^\dagger)\otimes{I_{n_1}})-(\kronF{R}{k}(\kronF{R}{k})^\dagger\otimes{A_{12}A_{12}^\dagger}).
 \end{align*}
Hence
\begin{align*}
P_{\hat{Z}_1^{k+1}}P_{\hat{Z}_2^{k+1}}\cdots P_{\hat{Z}_{k+1}^{k+1}}&=( P_{\hat{Z}_1^{k}}P_{\hat{Z}_2^{k}}\cdots P_{\hat{Z}_{k}^{k}}\otimes{I_{n_1}})-((\kronF{R}{k}(\kronF{R}{k})^\dagger\otimes{A_{12}A_{12}^\dagger}).
\end{align*}
Now, the induction hypothesis and the fact that $RR^\dagger=RR=R$ implies that 
\begin{align*}
P_{\hat{Z}_1^{k+1}}P_{\hat{Z}_2^{k+1}}\cdots P_{\hat{Z}_{k+1}^{k+1}}&=(\kronF{R}{k}\otimes{I_{n_1}})-(\kronF{R}{k}\otimes{A_{12}A_{12}^\dagger})\\
&=\kronF{R}{k}(I_{n_1}-{A_{12}A_{12}^\dagger})=\kronF{R}{k}R=\kronF{R}{k+1}.
\end{align*}
This proves the $k+1$ case and completes the proof.
\end{proof}
We are now ready to prove Theorem \ref{sparse_LS_theorem} 
\begin{proof}
    Let $k$ be a fixed integer and $\hat{w}_k$ be the unique solution of the linear system (\ref{eq:sparse_LS}). We define
\begin{align*}
    Z&=\mathcal{L}^{ {E}_{11}^\top}_k({A}_{11}^\top-\eta  {E}_{11}^{\top} {\widehat{W}}_2 {B}_1{ {B}_1}^\top){ {\hat{w}}}_k-b.
\end{align*}
Then equation (\ref{eq:sparse_LS}) becomes $(\kronF{\Theta_r}{k})^\top Z=0$. Multiplying both sides by $\kronF{\Theta}{k}_\ell$, combining the products of Kronecker products, and using the factorization of $\Pi$ results in
\begin{align*}
    \kronF{\Pi}{k} Z=0 \implies \kronF{(I-\Pi_1)}{k}Z=0,
\end{align*}
where $\Pi_1=A_{12}(A_{12}^\top E_{11}A_{12})^{-1}A_{12}^\top E_{11}^{-1}$ from the definition of $\Pi$. Hence, there exist vector coefficients $\gamma_i$ such that
\begin{align*}
    Z=-\sum_{i=1}^{k}Z^k_i\gamma_i
\end{align*}
where $Z^k_i$ is the $i^\text{th}$ column block of $\mathcal{M}_k(A_{12})$. Therefore
\begin{equation}\label{eq:block1}
    \mathcal{L}^{ {E}_{11}^\top}_k({A}_{11}^\top-\eta  {E}_{11}^{\top} {\widehat{W}}_2 {B}_1{ {B}_1}^\top){ {\hat{w}}}_k+\sum_{i=1}^kZ^k_i\gamma_i=b.
\end{equation}
Now since $\Pi^\top\Theta_r=\Theta_r$ and $\hat{w}_k=\kronF{\Theta_r}{k}\tilde{w}_k$, it implies that $(\kronF{\Pi}{k})^\top\hat{w}_k=\hat{w}_k$ and along with $A_{12}^\top\Pi^\top=0$, we can deduce that 
\begin{equation}\label{eq:block2}
    (Z_i^k)^\top\hat{w}_k=0
\end{equation} 
for all $1\leq i \leq k$.  Combining both (\ref{eq:block1}) and (\ref{eq:block2}) leads to the
following augmented linear system 
\begin{equation}\label{eq:sparse_LS_2}
\begin{split}
   \mathcal{G}_k\begin{pmatrix}
     {\hat{w}}_k\\
     {\Gamma}\\
\end{pmatrix}=\begin{pmatrix}
    \mathcal{L}_k^{ {E}_{11}^\top}( {A}_c^\top) & \mathcal{M}_k(A_{12})  \\
     \left(\mathcal{M}_k(A_{12})\right)^\top & 0
\end{pmatrix}
\begin{pmatrix}
     {\hat{w}}_k\\
     {\Gamma}\\
\end{pmatrix} =
\begin{pmatrix}
     {b}\\
     {0}\\
   \end{pmatrix}
\end{split}
\end{equation}   
where $A_c={A}_{11}-\eta B_1B_1^\top  {\widehat{W}}_2{E}_{11}$ and $\Gamma$, the dummy variable, is a vector that combines the $\gamma_i$ vectors. Now since $\mathcal{M}_k(A_{12})$ is not full rank, then the matrix above (\ref{eq:sparse_LS_2}) is a singular matrix. We now want to compute the rank of this latter by using the following identity \cite{Matsaglia01011974}
\begin{align}\label{eq:id_rank}
    \text{rank}\Big(\begin{bmatrix}
        A & B\\
        C & 0
    \end{bmatrix}\Big)=\text{rank}(B)+\text{rank}(C)+\text{rank}[(I-BB^\dagger)A(I-C^\dagger C)]
\end{align}
where $B^\dagger$ and $C^\dagger$ are pseudo-inverses of $B$ and $C$, respectively. We have that
\begin{align*}
    &\text{rank}(\mathcal{G}_k)=\text{rank}\left(\begin{pmatrix}
    \mathcal{L}_k^{ {E}_{11}^\top}( {A}_c^\top) & \mathcal{\widehat{M}}_k(A_{12})  \\
     \left(\mathcal{\widehat{M}}_k(A_{12})\right)^\top & 0
\end{pmatrix}\right)
\end{align*}
where $\mathcal{\widehat{M}}_k(A_{12})=\begin{pmatrix}
   Z_k^k ~Z_{k-1}^k~\cdots~Z_1^k 
\end{pmatrix}$ comes from reversing the order of the column blocks in $\mathcal{{M}}_k(A_{12})$. Since this is a reordering operation than it is rank preserving. Now, using (\ref{eq:id_rank}) on the reordered  matrix we get
\begin{align*}
&\text{rank}\left(\mathcal{G}_k\right)=2\times\text{rank}(Z_1^k)+\text{rank}\left(\begin{pmatrix}
    P_{Z_1^k} & 0\\
    0 & I
\end{pmatrix}\begin{pmatrix}
    \mathcal{L}_k^{ {E}_{11}^\top}( {A}_c^\top) & Z^k_k &\cdots &Z^k_{2}  \\
     (Z_k^k)^\top & 0& \cdots &0\\
     \vdots & \vdots & \vdots &\vdots\\
     (Z_{2}^k)^\top & 0& \cdots &0\\
\end{pmatrix}\begin{pmatrix}
    P_{Z_1^k} & 0\\
    0 & I
\end{pmatrix}\right)\\
&=2n_2(n_1-n_2)^{k-1}+\text{rank}\left(\begin{pmatrix}
    P_{Z^k_1}\mathcal{L}_k^{ {E}_{11}^\top}( {A}_c^\top)P_{Z^k_1} & P_{Z^k_1}Z^k_k &\cdots &P_{Z^k_1}Z^k_2  \\
     (Z_k^k)^\top P_{Z^k_1} & 0& \cdots &0\\
     \vdots & \vdots & \vdots &\vdots\\
     (Z_2^k)^\top P_{Z^k_1} & 0& \cdots &0\\
\end{pmatrix}\right)\\
&=2n_2(n_1-n_2)^{k-1}+\text{rank}\left(\begin{pmatrix}
    P_{Z_1^k}\mathcal{L}_k^{ {E}_{11}^\top}( {A}_c^\top)P_{Z_1^k}& \hat{Z}^k_k &\cdots &\hat{Z}^k_2 \\  (\hat{Z}^k_k)^\top & 0& \cdots &0\\
     \vdots & \vdots & \vdots &\vdots\\
    (\hat{Z}^k_2)^\top & 0& \cdots &0\\
\end{pmatrix}\right)
\end{align*}
where $P_{Z^k_1}=(I-Z^k_1 (Z^k_1)^\dagger)$ and $\hat{Z}^k_i$ is defined in Lemma~\ref{lemma_per}. Using the same identity in (\ref{eq:id_rank}) for the remaining $k-1$ blocks implies that
\begin{align*}
  \text{rank}(\mathcal{G}_k)
=2n_2\sum_{i=1}^kn_1^{k-i}(n_1-n_2)^{i-
1}
+\text{rank}\Big(P_{\hat{Z}_k^k}\cdots P_{\hat{Z}_2^k}P_{\hat{Z}_1^k}\mathcal{L}_k^{ {E}_{11}^\top}({A}_c^\top)P_{\hat{Z}_1^k}P_{\hat{Z}_2^k}\cdots P_{\hat{Z}_k^k}\Big),
\end{align*}
where we also used the fact that $\hat{Z}^k_1=Z^k_1$ and $P_{\hat{Z}_i^k}\hat{Z}_j^k=\hat{Z}_j^k$ for $2<i<j$. Now, using Lemma~\ref{lemma_per} and the symmetric property of the orthogonal projections defined above, we get
\begin{align*}
  \text{rank}(\mathcal{G}_k)
=2n_2\sum_{i=1}^kn_1^{k-i}(n_1-n_2)^{i-
1}
+\text{rank}\Big(\kronF{R}{k}\mathcal{L}_k^{ {E}_{11}^\top}( A_c^\top)\kronF{R}{k}\Big).
\end{align*}
We also show that  
\begin{align*}
    \Pi^\top R&= ({I}_{n_1}-E_{11}^{-\top} A_{12}(A_{12}^\top E_{11}^{-\top}A_{12})^{-1}A_{12}^\top)(I_{n_1}-A_{12}A_{12}^\dagger)=(I_{n_1}-A_{12}A_{12}^\dagger)=R
\end{align*}
and
\begin{align*}
    R~\Pi^\top &= (I_{n_1}-A_{12}A_{12}^\dagger)({I}_{n_1}-E_{11}^{-\top} A_{12}(A_{12}^\top E_{11}^{-\top}A_{12})^{-1}A_{12}^\top)
    \\&=({I}_{n_1}-E_{11}^{-\top} A_{12}(A_{12}^\top E_{11}^{-\top}A_{12})^{-1}A_{12}^\top)=\Pi^\top,
\end{align*}
which leads to 
\begin{align*}
    &\text{ rank}\Big(\kronF{R}{k}\mathcal{L}_k^{ {E}_{11}^\top}( A_c^\top)\kronF{R}{k}\Big)
    =\text{ rank}\Big(\kronF{R}{k}\kronF{\Pi}{k}\mathcal{L}_k^{ {E}_{11}^\top}( A_c^\top)(\kronF{\Pi}{k})^\top\kronF{R}{k}\Big)\\
    =&\text{ rank}\Big(\kronF{R}{k}\kronF{\Theta_l}{k}\mathcal{L}^{E_d^\top}_k( {A_d^c}^\top)(\kronF{\Theta_l}{k})^\top\kronF{R}{k}\Big)
    =\text{ rank}\Big(\kronF{Q_1}{k}(\kronF{Q_1}{k})^\top\kronF{\Theta_l}{k}\mathcal{L}^{ E_d^\top}_k({A_d^c}^\top)(\kronF{\Theta_l}{k})^\top\kronF{Q_1}{k}(\kronF{Q_1}{k})^\top\Big),
\end{align*}
where $A_d^c=A_d-\eta B_dB_d^\top\widetilde{W}_2E_d$ and  $R=Q_1Q_1^\top$ is a low rank factorization of the orthogonal projector onto the null space of $A_{12}^\top$, that can be achieved using a QR-factorization. Now, since $Q_1\in \mathbb{R}^{n_1\times n_1-n_2}$ has full column rank, then
\begin{align*}
    \text{ rank}\Big(\kronF{R}{k}\mathcal{L}_k^{ {E}_{11}^\top}( A_c^\top)\kronF{R}{k}\Big)
    =\text{ rank}\Big((\kronF{Q_1}{k})^\top\kronF{\Theta_l}{k}\mathcal{L}^{ {E}_{d}^\top}_k({A_d^c}^\top)(\kronF{\Theta_l}{k})^\top\kronF{Q_1}{k}\Big).
\end{align*}
We also have that
\begin{align*}
    \Theta_r(\Theta_l)^\top Q_1Q_1^{\top}=\Pi^\top R=R,
\end{align*}
which implies 
\begin{align*}
\text{rank }(\Theta_r(\Theta_l)^\top Q_1Q_1^{\top})=\text{rank }(R) = n_1-n_2.
\end{align*}
Using the fact that both $\Theta_l$ and $Q_1$ have full column rank, we get
\begin{align*}
    \text{rank }((\Theta_l)^\top Q_1)= n_1-n_2.
\end{align*}
Therefore, the square matrix $(\Theta_l)^\top Q_1$ is invertible and 
\begin{align*}
    \text{ rank}\Big(\kronF{R}{k}\mathcal{L}_k^{ {E}_{11}^\top}( A_c^\top)\kronF{R}{k}\Big)=\text{rank}\Big(\mathcal{L}^{E_d^\top}_k({A_d^c}^\top)\Big).
\end{align*}
Now, as shown in {\rm \cite[Th.~6]{kramer2023nonlinear1}}, since $E_d^{-1}A_d^c$ is stable then $\mathcal{L}_k({A_d^c}^\top E_d^{-\top})$ is also stable. Therefore $\mathcal{L}^{E_d^\top}_k({A_d^c}^\top)$ is full rank.
\begin{align*}
\text{rank}\Big(\mathcal{L}^{E_d^\top}_k({A_d^c}^\top)\Big)=(n_1-n_2)^k
\end{align*}
So, using Lemma~\ref{lemma_comb} leads to
\begin{align*}
\text{rank}(\mathcal{G}_k)
=2(n_1^k-(n_1-n_2)^k)+(n_1-n_2)^k
=2n_1^k-(n_1-n_2)^k.
\end{align*}
This result confirms that $\mathcal{G}_k\in \mathbb{R}^{n_1^k+kn_2n_1^{k-1}\times n_1^k+kn_2n_1^{k-1}}$ is not invertible. This is where Lemma~\ref{lemma_rank} and remark~\ref{remark_rank} plays an important role, since replacing $\mathcal{M}_k(A_{12})$ with $\mathcal{M}_k^{\tilde{I}}(A_{12})$ in $\mathcal{G}_k$ will not change its rank,
but removes the redundant rows and columns which makes
\begin{displaymath}
\begin{pmatrix}
    \mathcal{L}_k^{ {E}_{11}^\top}(A_c^\top) & \mathcal{M}_k^{\tilde{I}}(A_{12})  \\
     \left(\mathcal{M}^{\tilde{I}}_k(A_{12})\right)^\top & 0
\end{pmatrix}\in \mathbb{R}^{2n_1^k-(n_1-n_2)^k\times 2n_1^k-(n_1-n_2)^k}    
\end{displaymath} 
a full rank invertible matrix. Thus, the following linear system 
\begin{align*}
    \begin{pmatrix}
    \mathcal{L}_k^{ {E}_{11}^\top}( A_c^\top) & \mathcal{M}_k^{\tilde{I}}(A_{12})  \\
     \left(\mathcal{M}_k^{\tilde{I}}(A_{12})\right)^\top  & 0
\end{pmatrix}
\begin{pmatrix}
     {\hat{w}}_k\\
     {\Omega}\\
\end{pmatrix} =
\begin{pmatrix}
     {b}\\
     {0}\\
   \end{pmatrix}
\end{align*}
where $\Omega$ is a dummy vector constructed from some entries of $\Gamma$. Since the matrix is invertible, this ensures the uniqueness of $\hat{w}_k$ and completes the proof.  
\end{proof}

\section{Declarations}
\subsection[]{Conflict of interest}
The authors declare no competing interests.
\subsection[]{Funding}
This work was partially supported by the NSF under Grant CMMI-2130727. 
\subsection{Author's contribution}
Both authors, Hamza Adjerid and Jeff Borggaard, worked on developing the method, writing the codes, running simulations, writing and proofreading the manuscript. 
\bibliography{sn-bibliography}

@article{ahmad2017MomentmatchingBasedModel,
	title = {Moment-matching based model reduction for {Navier}-{Stokes} type quadratic-bilinear descriptor systems},
	volume = {315},
	doi = {10.1002/zamm.201500262},
	language = {English},
	number = {10},
	journal = {ZAMM - Journal of Applied Mathematics and Mechanics / Zeitschrift für Angewandte Mathematik und Mechanik},
	author = {Ahmad, Mian Ilyas and Benner, Peter and Goyal, Pawan and Heiland, Jan},
    doi = {https://doi.org/10.1002/zamm.201500262},
	year = {2017},
	pages = {303 -- 16},
}

@article{albrekht1961optimal,
  author = {E.~G.~Al'Brekht},
  title = {On the optimal stabilization of nonlinear systems},
  journal = {Journal of Applied Mathematics and Mechanics},
  volume = 25, 
  doi = {https://doi.org/10.1016/0021-8928(61)90005-3},
  pages = {1254--1266},
  year = 1962
}

@article{bender1987linear,
  title={The linear-quadratic optimal regulator for descriptor systems},
  author={Bender, Douglas J. and Laub, Alan J.},
  journal={{IEEE} Transactions on Automatic Control},
  volume={AC-32},
  number=8,
  doi={https://doi.org/10.1016/0005-1098(87)90119-1},
  year=1987
}

@article{benner2015timedependent,
  author = {Peter Benner and Jan Heiland},
  title={Time-dependent {D}irichlet conditions in finite element discretizations},
  journal = {ScienceOpen Research},
  doi = {10.14293/S2199-1006.1.SOR-MATH.AV2JW3.v1},
  year = 2015,
}

@article{benzi2017approximation,
  title={Approximation of functions of large matrices with {K}ronecker structure},
  author={Benzi, Michele and Simoncini, Valeria},
  journal={Numerische Mathematik},
  volume={135},
  number={1},
  pages={1--26},
  year={2017},
  doi = {https://doi.org/10.1007/s00211-016-0799-9},
  publisher={Springer}
}

@inproceedings{borggaard2010linear,
  author={Borggaard, Jeff and Stoyanov, Miroslav and Zietsman, Lizette},
  booktitle={Proceedings of the 2010 American Control Conference}, 
  title={Linear feedback control of a von Kármán street by cylinder rotation}, 
  year={2010},
  volume={},
  number={},
  pages={5674-5681},
  doi={10.1109/ACC.2010.5531133}
}

@inproceedings{borggaard2019QQR,
  title={The quadratic-quadratic regulator problem: Approximating feedback controls for quadratic-in-state nonlinear systems},
  author={Borggaard, Jeff and Zietsman, Lizette},
  booktitle={2020 American Control Conference (ACC)},
  pages={818--823},
  year={2020},
  doi = {10.23919/ACC45564.2020.9147286},
  organization={IEEE}
}

@article{borggaard2021PQR,
   author={Borggaard, Jeff and Zietsman, Lizette},
   title={On Approximating Polynomial-Quadratic Regulator Problems},
   journal={IFAC PaersOnLine},
   volume=54,
   number=9,
   pages={329--334},
   year={2021},
   doi={https://doi.org/10.1016/j.ifacol.2021.06.090}
}

@article{breiten2019Taylor,
	title = {Taylor expansions of the value function associated with a bilinear optimal control problem},
	volume = {36},
	issn = {02941449},
	doi = {10.1016/j.anihpc.2019.01.001},
	number = {5},
	journal = {Annales de l'Institut Henri Poincaré C, Analyse non linéaire},
	author = {Breiten, Tobias and Kunisch, Karl and Pfeiffer, Laurent},
    doi = {https://doi.org/10.1016/j.anihpc.2019.01.001},
	year = {2019},
	keywords = {HJB, Bilinear},
	pages = {1361--1399},
}

@article{brust2020computationally,
  author = {Brust, Johannes J. and Marcia, Roummel F. and Petra, Cosmin G.},
  title = {Computationally Efficient Decompositions of Oblique Projection Matrices},
  journal = {SIAM Journal on Matrix Analysis and Applications},
  volume = {41},
  number = {2},
  pages = {852--870},
  year = {2020},
  doi = {10.1137/19M1288115},
}

@article{chan1994domain, 
  title={Domain decomposition algorithms}, 
  volume={3}, 
  doi={10.1017/S0962492900002427}, 
  journal={Acta Numerica}, 
  author={Chan, Tony F. and Mathew, Tarek P.}, 
  year={1994}, 
  doi = {https://doi.org/10.1017/S0962492900002427},
  pages={61--143}
}

@article{chen2019RecursiveBlockedAlgorithms,
	title = {Recursive blocked algorithms for linear systems with {Kronecker} product structure},
	volume = {15},
	doi = {10.1007/s11075-019-00797-5},
	number = {9},
	journal = {Numerical Algorithms},
	author = {Chen, Minhong and Kressner, Daniel},
	year = {2019},
    doi = {https://doi.org/10.1007/s11075-019-00797-5},
	pages = {820 -- 20}
}

@article{cobb1983Descriptor,
	title = {Descriptor variable systems and optimal state regulation},
	volume = {AC-28},
	number = {5},
	journal = {{IEEE} Transactions on Automatic Control},
	author = {Cobb, D.},
	year = {1983},
	pages = {601 -- 611},
    doi = {10.1109/TAC.1983.1103283}
}

@article{cobb1984Controllability,
	title = {Controllability, observability, and duality in singular systems},
	volume = {AC-29},
	number = {12},
	journal = {{IEEE} Transactions on Automatic Control},
	author = {Cobb, D.},
	year = {1984},
	pages = {1076 -- 1082},
    doi = {10.1109/TAC.1984.1103451}
}

@article{dolgov2019tensor,
	title = {Tensor decompositions for high-dimensional {Hamilton}-{Jacobi}-{Bellman} equations},
	volume = {43},
    number = 3,
	journal = {SIAM Journal on Scientific Computing},
	author = {Dolgov, Sergey and Kalise, Dante and Kunisch, Karl~K.},
    pages = {A1625--A1650},
	year = {2021},
    doi ={https://doi.org/10.1137/19M1305136}
}

@article{fujimoto2008ComputationNonlinearBalanced,
	title = {Computation of nonlinear balanced realization and model reduction based on {Taylor} series expansion},
	volume = {57},
	doi = {10.1016/j.sysconle.2007.08.015},
	number = {4},
	journal = {Systems \& Control Letters},
	author = {Fujimoto, Kenji and Tsubakino, Daisuke},
	year = {2008},
    doi = {https://doi.org/10.1016/j.sysconle.2007.08.015},
	pages = {283 -- 289}
}

@book{fursikov1991optimal,
    author = {Andrei V. Fursikov},
    title = {Optimal Control of Distributed Systems. Theory and Applications},
    publisher = {American Mathematical Society},
    address = {Providence, RI},
    year = {1991},
    doi = {https://doi.org/10.1090/mmono/187}
}

@book{gunzburger1989finite,
    author = {Max Gunzburger},
    title = {Finite Element Methods for Viscous Incompressible Flows},
    publisher = {Academic Press},
    address = {San Diego, CA},    
    year = {1989},
    doi = {10.1016/C2009-0-22263-X}
}

@article{heiland2016DAE,
	title = {A Differential-Algebraic {Riccati} equation for Applications in Flow Control},
	volume = {54},
	issn = {0363-0129, 1095-7138},
	url = {http://epubs.siam.org/doi/10.1137/151004963},
	doi = {10.1137/151004963},
	number = {2},
	journal = {SIAM Journal on Control and Optimization},
	author = {Heiland, Jan},
	month = jan,
	year = {2016},
    doi = {https://doi.org/10.1137/151004963},
	pages = {718--739},
}

@article{kalise2018polynomial,
	title = {Polynomial approximation of high-dimensional {Hamilton}-{Jacobi}-{Bellman} equations and applications to feedback control of semilinear parabolic {PDEs}},
	doi = {10.1137/17m1116635},
    volume=40,
	number = {2},
	journal = {SIAM Journal on Scientific Computing},
	author = {Kalise, Dante and Kunisch, Karl},
	year = {2018},
	pages = {A629 -- A652},
}

@article{kramer2023nonlinear1,
  author = {Boris Kramer and Serkan Gugercin and Jeff Borggaard and Linus Balecki},
  title = {Scalable computation of energy functions for nonlinear balanced truncation},
  journal = {Computer Methods in Applied Mechanics and Engineering},
  volume = {427},
  number = 117011,
  year = 2024,
  doi = {https://doi.org/10.1016/j.cma.2024.117011}
}

@article{kunkel2008OptimalControlUnstructured,
	title = {Optimal control for unstructured nonlinear differential-algebraic equations of arbitrary index},
	volume = {20},
	copyright = {http://www.springer.com/tdm},
	issn = {0932-4194, 1435-568X},
	url = {http://link.springer.com/10.1007/s00498-008-0032-1},
	doi = {10.1007/s00498-008-0032-1},
	abstract = {We study optimal control problems for general unstructured nonlinear differential-algebraic equations of arbitrary index. In particular, we derive necessary conditions in the case of linear-quadratic control problems and extend them to the general nonlinear case. We also present a Pontryagin maximum principle for general unstructured nonlinear DAEs in the case of restricted controls. Moreover, we discuss the numerical solution of the resulting two-point boundary value problems and present a numerical example.},
	language = {en},
	number = {3},
	urldate = {2025-07-18},
	journal = {Mathematics of Control, Signals, and Systems},
	author = {Kunkel, Peter and Mehrmann, Volker},
	month = aug,
	year = {2008},
    doi = {https://doi.org/10.1007/s00498-008-0032-1},
	note = {Publisher: Springer Science and Business Media LLC},
	pages = {227--269},
	file = {Kunkel and Mehrmann - 2008 - Optimal control for unstructured nonlinear differe.pdf:/Users/jborggaa/Zotero/storage/P6JFFSDL/Kunkel and Mehrmann - 2008 - Optimal control for unstructured nonlinear differe.pdf:application/pdf},
}

@inproceedings{lewis1985optimal,
  title={Optimal control for singular systems},
  author={Lewis, F.~L.},
  booktitle={Proceedings of the 24th Conference on Decision and Control},
  pages = {266--272},
  year = 1985,
  doi ={10.1109/CDC.1985.268843},
}

@article{lukes1969optimal,
  author = {D.~L. Lukes},
  title = {Optimal regulation of nonlinear dynamical systems},
  journal = {SIAM Journal of Control},
  volume = 7,
  pages = {75--100},
  year = 1969,
  doi = {https://doi.org/10.1137/0307007},
}

@inproceedings{navasca2000solution,
  author = {C.~L. Navasca and A.~J. Krener},
  title = {Solution to {H}amiltonian {J}acobi {B}ellman equations},
  booktitle = {Conference on Decision and Control},
  organization = {IEEE},
  pages = {570--574},
  volume = 39,
  year = 2000,
  doi = {10.1109/CDC.2000.912825}
}

@article{petzold1982differential,
  author = {Petzold, Linda},
  title = {Differential/{A}lgebraic {E}quations are not {ODE}’s},
  journal = {SIAM Journal on Scientific and Statistical Computing},
  volume = {3},
  number = {3},
  pages = {367-384},
  year = {1982},
  doi = {10.1137/0903023}
}

@article{scherpen1994normalized,
  author = {Jacquelien M. Scherpen and A. {van der Schaft}},
  title = {Normalized coprime factorizations and balancing for unstable nonlinear systems},
  journal = {International Journal of Control}, 
  volume = 60,
  number = 6,
  pages = {1193--1222},
  doi = {https://doi.org/10.1080/00207179408921517},
  year = 1994
}

@article{scherpen1996Hinf,
  author = {Jacquelien M. Scherpen},
  title  = {${H}_{\infty}$ balancing for nonlinear systems},
  journal = {International Journal of Robust and Nonlinear Control},
  volume = 6,
  number = 7,
  pages = {645--668},
  year = 1996,
  doi = {https://doi.org/10.1002/(sici)1099-1239(199608)6:7<645::aid-rnc179>3.0.co;2-x},
}

@book{temam1977ns,
   author = {Roger Temam},
   title = {Navier-{S}tokes Equations},
   publisher = {North-Holland},
   address = {New York},
   doi = {https://doi.org/10.1115/1.3424338},
   year = 1977
}

@article{vanderschaft1992L2gainAnalysisNonlinear,
	author = {{van der Schaft}, A.J.},
	title = {${L}_2$-gain analysis of nonlinear systems and nonlinear state-feedback ${H}_\infty$ control},
	volume = {37},
	number = {6},
	doi = {10.1109/9.256331},
	journal = {IEEE Transactions on Automatic Control},
	year = {1992},
	pages = {770--784},
}

@phdthesis{Sjoberg,
    author = {J. Sjöberg},
    title ={Optimal Control and Model Reduction of Nonlinear DAE Models},
    school ={Institutionen för systemteknik},
    address = {Link\"oping, Sweden},
    year = {2008},
    URL = {https://www.proquest.com/dissertations-theses/optimal-control-model-reduction-nonlinear-dae/docview/3096062331/se-2}
}

@book{KunkelandMehrmann,
    author ={P. Kunkel and V. Mehrmann},
    title = {Differential-Algebraic Equations---Analysis and Numerical Solution},
    publisher ={European Mathematical Society},
    address = {Z\"urich},
    year = {2006},
    doi = {10.4171/017}
}

@article{BennerIndex2,
    author = {Peter Benner and Matthias Heinkenschloss and Jens Saak and Heiko K. Weichelt},
    title = {Efficient solution of large-scale algebraic {R}iccati equations associated with index-2 {DAE}s via the inexact low-rank {N}ewton-{ADI} method},
    journal ={Applied Numerical Mathematics,} ,
    year = {2020},
    doi = {https://doi.org/10.1016/j.apnum.2019.11.016}
}

@article{HeinkandSorenProj,
author = {Heinkenschloss, Matthias and Sorensen, Danny C. and Sun, Kai},
title = {Balanced Truncation Model Reduction for a Class of Descriptor Systems with Application to the {O}seen Equations},
journal = {SIAM Journal on Scientific Computing},
volume = {30},
number = {2},
pages = {1038-1063},
year = {2008},
doi = {https://doi.org/10.1137/070681910}
}

@article{Pantelides,
    author = {C. C. Pantelides},
    title = {The consistent initialization of differential-algebraic systems},
    journal = {SIAM
Journal on Scientific Computing},
    year = {1998},
    doi = {https://doi.org/10.1137/0909014}
}

@book{Fritzon,
    author = {P. Fritzson.},
    title ={Principles of Object-Oriented Modeling and Simulation with Modelica 2.3} ,
    publisher ={Wiley-IEEE Press},
    address = {Hoboken, NJ},
    year = {20014},
    doi = {https://doi.org/10.1002/9781118989166},
    
    
}

@mastersthesis{Miro,
    author ={Miroslav K. Stoyanov},
    title = {Optimal Linear Feedback Control for Incompressible
    Fluid Flow},
    school ={Virginia Polytechnic Institute and State University},
    address = {Blacksburg, VA USA},
    doi = {http://hdl.handle.net/10919/33454},
    year = {2006}
}

@article{Matsaglia01011974,
author = {George Matsaglia and George P. H. Styan},
title = {Equalities and Inequalities for Ranks of Matrices},
journal = {Linear and Multilinear Algebra},
volume = {2},
number = {3},
pages = {269--292},
year = {1974},
publisher = {Taylor \& Francis},
doi = {10.1080/03081087408817070},
}

\end{document}